\newtheorem{thm}{Theorem}[section]
\newtheorem{lem}[thm]{Lemma}
\newtheorem{prop}[thm]{Proposition}
\newtheorem{cor}[thm]{Corollary}
\newtheorem{defn}[thm]{Definition}
\newcommand{\thmref}[1]{Theorem~\ref{#1}}
\newcommand{\propref}[1]{Proposition~\ref{#1}}
\newcommand{\lemref}[1]{Lemma~\ref{#1}}
\newcommand{\corref}[1]{Corollary~\ref{#1}}
\theoremstyle{remark}
\newcommand{\Z}{{\mathbb Z}}
\newcommand{\Q}{{\mathbb Q}}
\newcommand{\R}{{\mathbb R}}
\newcommand{\C}{{\mathbb C}}
\newcommand{\K}{{\mathbb K}}
\begin{document}

\title[Number field extension]{A number field extension of \\
a question of Milnor}

\author{T. Chatterjee, S. Gun and P. Rath}

\address[Tapas Chatterjee]
{Indian Institute of Technology Ropar,
 Nangal Road,  Rupnagar 140001, 
 Punjab, 
 India}

\address[Sanoli Gun]
      {The Institute of Mathematical Sciences,
       CIT Campus, Taramani, Chennai 600113,
       India.}
\address[Purusottam Rath]
      {Chennai Mathematical Institute,
       Plot No H1, SIPCOT IT Park,
       Padur PO, Siruseri 603103,
       Tamil Nadu, 
       India.}      

\email[T. Chatterjee]{tapasc@iitrpr.ac.in}

\email[S. Gun]{sanoli@imsc.res.in}
\email[P. Rath]{rath@cmi.ac.in}

\maketitle

{\sl \hfill To Professor Ram Murty on the occasion of his 
sixtieth birthday \qquad }

\begin{abstract}
Milnor \cite{JM} formulated  a conjecture about 
rational linear independence of some special 
Hurwitz zeta values. In \cite{GMR}, this conjecture was
studied and an extension of Milnor's conjecture was suggested.
In this note, we investigate the number field generalisation
of this extended Milnor conjecture. We indicate the motivation
for considering this number field case by noting 
that such a phenomenon is true in an analogous context. 
We also study  some new spaces related to 
normalised Hurwitz zeta values.
\end{abstract}

\section{Introduction}

For a real number $x$ with $ 0 < x \le 1$ and $s\in \C$ with $\Re(s)>1$,
the Hurwitz zeta function is defined by
\begin{equation*}
\zeta(s,x):=\sum_{n=0}^\infty \frac{1}{(n+x)^s}.
\end{equation*}

This (as a function of $s$) can be analytically extended  to the entire
complex plane except at $s=1$ where it has a simple pole
with residue one. Note that $\zeta(s,1)=\zeta(s)$ is
the classical Riemann zeta function.

In 1983,  Milnor (see \cite{JM}, \S 6) made a conjecture about the 
linear independence of certain special Hurwitz zeta values over $\Q$. 
More precisely, he suggested the following:

{\it 
For integers $q, k>1$, the $\Q$-linear space
$V(k,\Q)$ generated by the  real numbers
\begin{equation*}
\zeta(k,a/q), ~1\le a< q ~{\rm with} ~(a,q)=1  
\end{equation*}
has dimension $\varphi(q)$.
}

The relevance of  these Hurwitz zeta values is that they 
form a natural generating set
for the study of special values of  Dirichlet series 
associated to periodic arithmetic
functions. More precisely, one is interested in the 
special values of $L$-series of the form
$$
L(s,f) := \sum_{n=1}^{\infty} \frac{f(n)}{n^s}
$$ 
where $f$ is defined over integers and  $f(n+q) = f(n)$ for all integers
$n$ with a fixed modulus $q$. Typically, $f$ takes algebraic values.
Running over arithmetic progressions mod $q$, 
one immediately deduces that 
$$
L(s,f) = q^{-s} \sum_{a=1}^{q} f(a) \zeta(s,a/q).
$$ 

In \cite{GMR}, the second and the third author with M. Ram Murty 
studied Milnor's  conjecture and derived
a non-trivial lower bound for the dimension
of $V(k,\Q)$, namely that the dimension is at least 
half of the conjectured dimension. They also obtained
a conditional improvement of this lower bound and noted
that any unconditional improvement of this ``half''
threshold will have remarkable consequences in relation to irrationality
of the numbers $\zeta(2d+1)/{\pi}^{2d+1}$.

Furthermore in \cite{GMR}, the authors suggested 
a generalisation of the original conjecture of Milnor. There are 
at least two reasons for considering
such a generalisation. First is that the 
inhomogeneous version of Baker's theorem
for linear forms in logarithms of algebraic 
numbers naturally suggests such
a  generalisation. Secondly, typically one is interested in
irrationality of $\zeta(2d+1)/{\pi}^{2d+1}$ as well as that of
$\zeta(2d+1)$ and this generalisation predicts the irrationality
of both these numbers. Following is this extension suggested by 
the authors (see \cite{GMR}):

Extended Milnor conjecture: 
{\it In addition to the original Milnor's conjecture, 
$V(k,\Q) \cap \Q = \{0\}$.}

In an earlier work \cite{TC}, the first  author 
considered various
ramifications of this conjecture.

In this work, we investigate the number field
extension of the above conjecture. One 
of the reasons for considering such an extension
is that  we are interested in the transcendence of
odd zeta values $\zeta(2d+1)$ as well
as of the normalised values $\zeta(2d+1)/{\pi}^{2d+1}$. 
This extension predicts such an eventuality.
Moreover, there is a related set up 
where the analogous statement can be 
established unconditionally. This is
the content of \thmref{req} in the next section.
See also \cite{GMR2} and  \cite{WK} for a modular interpretation
of the conjectural transcendence of
the normalised values $\zeta(2d+1)/{\pi}^{2d+1}$. 

It will be evident  that considering the  
extended Milnor conjecture to a number field $\K$ comes
with a caveat, namely it depends on the arithmetic of $\K$
(for instance compare \thmref{3} with \corref{1}).

As we shall see in section 3, the expected $\K$-dimension 
is $\varphi(q)$ for number fields $\K$ such that
$\K \cap \Q(\zeta_q) = \Q$. In such cases, the
mathematics is somewhat amenable and one can derive
similar lower bounds for these dimensions as has been
done in the earlier works \cite{GMR} and \cite{GMR1}.

On the other hand, when the ambient number field $\K$
has non-trivial intersection with  the $q$-th cyclotomic field $\Q(\zeta_q)$,
nothing is known. In section 4, we investigate
this difficult case and derive some results. We also
try to highlight  the  crux of the complexity.

Finally in the last section, we consider some new 
spaces generated by normalised
Hurwitz zeta values which appear naturally in the study of
irrationality of odd zeta values. The 
mathematics in this set up is somewhat different. For instance,
the parity of $k$ enters into the question non-trivially which is not evident
in the earlier questions.

\section{The analogous case for the space generated 
by the values of $L(1,\chi)$}

In this section, we consider the question of linear independence of
the special values $L(1,\chi)$ as $\chi$ runs over non-trivial Dirichlet
characters mod $q$. This  serves as a guiding line for the questions addressed in this work.

One of reasons why we have a clearer picture in this context
is the following seminal theorem of Baker (see \cite{AB}, also \cite{MR}).

\begin{thm}
{\it If $\alpha_1,
\cdots \alpha_n$ are non-zero algebraic numbers such that the numbers
$\log \alpha_1,
\cdots,\log \alpha_n$ are linearly independent over rationals,
then the numbers $1, \log \alpha_1,
\cdots,\log \alpha_n$ are linearly independent over $\overline{\Q}$.}
\end{thm}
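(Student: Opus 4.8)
The plan is to argue by contradiction through Baker's method of auxiliary functions, which is the circle of ideas underlying the cited references. Suppose there were algebraic numbers $\beta_0, \beta_1, \dots, \beta_n$, not all zero, with
\[
\beta_0 + \beta_1 \log\alpha_1 + \cdots + \beta_n \log\alpha_n = 0 .
\]
If $\beta_0 = 0$ this is the homogeneous assertion that $\log\alpha_1, \dots, \log\alpha_n$ are already linearly independent over $\overline{\Q}$ once they are over $\Q$, which I would treat by the construction below specialised to a vanishing constant term. So I concentrate on the genuinely inhomogeneous case $\beta_0 \neq 0$. At least one of $\beta_1, \dots, \beta_n$ is then nonzero, say $\beta_n \neq 0$ after relabelling, and I may rewrite the relation as $\log\alpha_n = \gamma_0 + \gamma_1 \log\alpha_1 + \cdots + \gamma_{n-1}\log\alpha_{n-1}$ with all $\gamma_j$ algebraic and, crucially, $\gamma_0 \neq 0$.

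I would then construct the auxiliary function. Using Siegel's lemma I would produce algebraic coefficients $p(\lambda)$, indexed by $\lambda = (\lambda_0, \dots, \lambda_n) \in \{0,\dots,L\}^{n+1}$, not all zero and of controlled height, so that the entire function
\[
F(z) = \sum_{\lambda} p(\lambda)\, e^{\lambda_0 \gamma_0 z} \prod_{j=1}^{n-1} \alpha_j^{(\lambda_j + \lambda_n \gamma_j)\, z} = \sum_{\lambda} p(\lambda)\, e^{w_\lambda z},
\]
with frequencies $w_\lambda = (\lambda_0 - \lambda_n)\gamma_0 + \sum_{j=1}^{n} \lambda_j \log\alpha_j$, vanishes together with all its derivatives up to a large order at every integer $z = \ell$ with $1 \le \ell \le h$. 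The linear system for the $p(\lambda)$ is solvable because the number of unknowns dominates the number of imposed conditions. Substituting $\log\alpha_n = \gamma_0 + \sum_{j<n}\gamma_j\log\alpha_j$, the products $\prod_{j=1}^{n} \alpha_j^{\lambda_j \ell}$ appearing in $F^{(t)}(\ell)$ are algebraic; the distinguishing feature of the inhomogeneous case is the surviving factor $e^{(\lambda_0 - \lambda_n)\gamma_0 \ell}$, which carries the constant term and is genuinely transcendental.

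With this in hand I would run Baker's extrapolation: comparing the maximum-modulus upper bound for $|F^{(t)}(\ell)|$ on a large disc against an arithmetic lower bound of Liouville type, I would show that the vanishing of $F$ propagates from the initial range to a much longer range of integers, at the mild cost of lowering the order of vanishing, and I would iterate this amplification a bounded number of times until $F$ is forced to vanish to positive order at far more points than its analytic structure permits. The final contradiction is a zero estimate: reading the vanishing conditions as a homogeneous linear system in the $p(\lambda)$, the coefficient matrix is a generalised Vandermonde built from the $w_\lambda$ and from powers of the algebraic numbers $\alpha_i^{\ell}$, and its nonvanishing, which rests on the $\Q$-linear independence of $\log\alpha_1, \dots, \log\alpha_n$ together with $\gamma_0 \neq 0$, forces all $p(\lambda) = 0$, contradicting their construction.

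The hard part will be the extrapolation together with its arithmetic bookkeeping. One must balance the parameters $L$, $h$, the order of vanishing, and the number of amplification rounds so that the analytic upper bound strictly beats the arithmetic lower bound at every stage, while keeping the degrees and heights of the coefficients under control throughout the iteration. What makes the inhomogeneous statement genuinely more delicate than its homogeneous counterpart is precisely the extra exponential $e^{\gamma_0 z}$ encoding the constant $1$: its values at integers are no longer algebraic, so securing the Liouville-type lower bound in its presence requires the several-variable refinement of the construction, and verifying that these transcendental factors do not spoil the nonvanishing of the final determinant is where the real weight of the proof lies.
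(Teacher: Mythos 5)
This statement is Baker's theorem itself, and the paper does not prove it: it is imported verbatim from Baker's book \cite{AB} (see also \cite{MR}) as the engine behind \thmref{req}. So the only fair benchmark is Baker's actual proof, and measured against that, your sketch has concrete gaps beyond being a roadmap. You correctly name the architecture (Siegel-lemma construction, extrapolation, final determinant), but the construction you actually display would fail at the first step. Your $F(z)=\sum_\lambda p(\lambda)e^{w_\lambda z}$ is a function of \emph{one} variable whose values at integers, as you yourself concede, retain the transcendental factor $e^{(\lambda_0-\lambda_n)\gamma_0 \ell}$; consequently the vanishing conditions $F^{(t)}(\ell)=0$ are not linear equations with algebraic coefficients, so Siegel's lemma cannot be applied to produce the $p(\lambda)$, and no Liouville-type lower bound is available for the nonzero values either. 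In Baker's proof the inhomogeneous term is carried instead by a factor $z_0^{\lambda_0}e^{\lambda_n\beta_0 z_0}$ inside a function $\Phi(z_0,\ldots,z_{n-1})$ of $n$ variables, precisely so that on the diagonal $z_0=\cdots=z_{n-1}=\ell$ the assumed relation, in the form $e^{\beta_0}\prod_{j<n}\alpha_j^{\beta_j}=\alpha_n$, recombines every exponential into the \emph{algebraic} quantity $\prod_{j<n}\alpha_j^{\lambda_j\ell}\,\alpha_n^{\lambda_n\ell}$; the extra degrees of freedom come from the polynomial factor $z_0^{\lambda_0}$, not from a further exponential. Deferring this to ``the several-variable refinement'' defers the theorem itself: the extrapolation you invoke (longer range of integers at the cost of lower order of vanishing) simply does not exist for one-variable exponential polynomials when $n\ge 2$ --- this is exactly why Gelfond's one-variable method stalled at two logarithms and why Baker's many-variable interpolation was the breakthrough.

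The endgame has a second gap. Your final contradiction needs the frequencies $w_\lambda=(\lambda_0-\lambda_n)\gamma_0+\sum_j\lambda_j\log\alpha_j$ to be pairwise distinct, and you claim this follows from the $\Q$-linear independence of the $\log\alpha_j$ together with $\gamma_0\neq 0$. It does not: a collision $w_\lambda=w_{\lambda'}$ with $\lambda_0-\lambda_n\neq\lambda_0'-\lambda_n'$ amounts to $\gamma_0$ being a $\Q$-linear combination of $\log\alpha_1,\ldots,\log\alpha_n$, and since $\gamma_0$ is by definition an algebraic combination of these logarithms, ruling this out is an instance of the very $\overline{\Q}$-linear independence you are trying to prove --- circular unless the whole argument is organised as an induction on $n$, which your outline never sets up. Combined with the wholesale deferral of the quantitative bookkeeping (``I would run Baker's extrapolation''), each of the three stages, as written, is either unjustified or would fail; what you have is an accurate table of contents for Baker's proof, not a proof.
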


In an earlier work,  R. Murty and K. Murty \cite{RK}  used Ramachandra units to prove that
the values  $L(1,\chi)$ as $\chi$ runs through non-trivial even Dirichlet
characters mod $q$ are linearly independent over $\overline{\Q}$.
We note that without much effort, the following extension of their
result can be obtained.

\begin{thm}\label{req}
The numbers $L(1,\chi)$ as $\chi$ runs through non-trivial even Dirichlet
characters mod $q$ and $1$ are linearly independent over $\overline{\Q}$.
\end{thm}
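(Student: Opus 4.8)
The plan is to express each $L(1,\chi)$ for a non-trivial even character as an explicit $\overline{\Q}$-linear combination of logarithms of algebraic numbers, and then invoke Baker's theorem (Theorem~2.1 above) to conclude the linear independence of these values together with $1$.

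First I would recall the classical closed form for $L(1,\chi)$. For a non-trivial even Dirichlet character $\chi$ mod $q$, one has
\begin{equation*}
L(1,\chi) = -\frac{1}{q}\sum_{a=1}^{q-1}\chi(a)\log\left|1-\zeta_q^a\right|,
\end{equation*}
where $\zeta_q = e^{2\pi i/q}$. Thus each such $L(1,\chi)$ lies in the $\overline{\Q}$-vector space spanned by the numbers $\log|1-\zeta_q^a|$ for $1 \le a \le q-1$, equivalently by the logarithms of the real algebraic numbers $|1-\zeta_q^a|^2 = (1-\zeta_q^a)(1-\zeta_q^{-a})$. The key input from the earlier work of R.\ Murty and K.\ Murty \cite{RK} via Ramachandra units is that the values $L(1,\chi)$, as $\chi$ runs over the non-trivial even characters mod $q$, are already linearly independent over $\overline{\Q}$; equivalently, the corresponding linear forms in the $\log|1-\zeta_q^a|$ are linearly independent, which in Baker-theoretic terms means the relevant cyclotomic units generate a subgroup of rank equal to the number of such characters, namely $\varphi(q)/2 - 1$.

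The heart of the argument is to upgrade this homogeneous independence to the inhomogeneous statement that includes $1$. I would argue by contradiction: suppose there is a nontrivial $\overline{\Q}$-linear relation
\begin{equation*}
\beta_0 \cdot 1 + \sum_{\chi} \beta_\chi \, L(1,\chi) = 0,
\end{equation*}
the sum over non-trivial even $\chi$. Substituting the closed form rewrites the relation as $\beta_0 + \sum_a \gamma_a \log\alpha_a = 0$ for suitable algebraic $\gamma_a$ and algebraic $\alpha_a$ (the cyclotomic units $|1-\zeta_q^a|$ or their squares). Since the homogeneous independence of the $L(1,\chi)$ already forces each $\beta_\chi = 0$ once $\beta_0 = 0$, it suffices to rule out the possibility $\beta_0 \neq 0$. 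Extracting a maximal $\Q$-linearly independent subset of the logarithms $\log\alpha_a$ appearing with nonzero coefficient, the relation becomes an $\overline{\Q}$-linear dependence among $1$ and these $\Q$-independent logarithms of nonzero algebraic numbers; Baker's theorem then forces all coefficients, including $\beta_0$, to vanish, a contradiction.

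The main obstacle I anticipate is the bookkeeping needed to pass cleanly between the two independence statements: one must verify that the logarithms $\log|1-\zeta_q^a|$ occurring with nonzero coefficients can be reduced to a genuinely $\Q$-linearly independent family of logarithms of algebraic numbers before Baker's theorem applies, since Baker's hypothesis requires rational (not merely $\overline{\Q}$) linear independence of the logarithms. Here the structure theory of cyclotomic units—their multiplicative relations such as the distribution relations $\prod_{b}(1-\zeta_q^{b}) = $ unit up to the norm and the relation $\prod_{a=1}^{q-1}(1-\zeta_q^a)=q$—must be tracked to ensure no spurious $\Q$-relations among the $\log\alpha_a$ collapse the argument. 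Once this reduction is in place, the inhomogeneous conclusion is an immediate consequence of Theorem~2.1, and the extension of the Murty--Murty result to include $1$ follows ``without much effort,'' as asserted.
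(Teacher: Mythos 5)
Your proposal is correct, but it follows a genuinely different route from the paper's two-line argument. The paper stays with the Ramachandra units from the Murty--Murty work \cite{RK}: each $L(1,\chi)$ for non-trivial even $\chi$ is a linear form in logarithms of \emph{multiplicatively independent} real units, so those logarithms are $\Q$-linearly independent from the outset, and a single application of Theorem~2.1 shows that any non-trivial $\overline{\Q}$-linear combination of the $L(1,\chi)$ is transcendental --- which yields the homogeneous independence and the independence from $1$ simultaneously, with no reduction step. You instead expand via the classical cyclotomic closed form, black-box the homogeneous independence from \cite{RK}, and kill the constant term by extracting a maximal $\Q$-linearly independent subfamily of the logarithms before applying Baker. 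That works, and note that your anticipated ``main obstacle'' is in fact a non-issue: since you only need $\beta_0 = 0$ from Baker (the $\beta_\chi$ then vanish by the cited homogeneous result), you never have to track distribution relations or any other multiplicative structure of the cyclotomic units --- any finite set of real numbers admits a maximal $\Q$-independent subset, and whatever rational relations hold are simply absorbed into the rewritten coefficients. Two small corrections: the closed form should carry the Gauss sum, $L(1,\chi) = -\frac{\tau(\chi)}{q}\sum_{a}\overline{\chi}(a)\log\left|1-\zeta_q^a\right|$, and it holds as stated only for primitive $\chi$ (imprimitive characters acquire an algebraic Euler-factor correction), though neither point affects your argument since you use only that the coefficients are algebraic. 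The paper's route buys economy and the stronger conclusion that every non-trivial combination is itself transcendental; yours buys explicitness, requiring only the classical formula plus the Murty--Murty theorem as a black box.
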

\begin{proof}
As noticed in \cite{RK}, each of these special values is a linear
form in logarithms involving real multiplicatively independent
units of Ramachandra. Thus any linear combination
$$
\sum_{\chi \text{ even } \atop \chi \ne 1} \lambda_{\chi} L(1, \chi)
$$
with $\lambda_{\chi}$ algebraic, not all zero, 
is necessarily transcendental
by Baker's theorem.
\end{proof}

We now  highlight as well as summarise the salient features 
in this set up. This will serve as an indicator of what to expect
in the more involved case of special values related
to Milnor's conjecture.

\begin{itemize}

\item{} When $\chi$ is an odd character, it can seen that $L(1,\chi)$
is an algebraic multiple of $\pi$ (see page 38 of \cite{LW} for instance).
 Thus the $L(1,\chi)$
values when $\chi$ runs through  odd characters mod $q$
form a one dimensional vector space over $\overline{\Q}$.
Let us call this space the arithmetic space and denote it
by $V_{ar}$. Since $\pi$ is transcendental, we have
$$
V_{ar} \cap \overline{\Q} = \{ 0 \}.
$$ 

\item{} 
The $\overline{\Q}$ vector space generated
by the  $L(1,\chi)$
values when $\chi$ runs through non trivial 
even characters mod $q$
is of optimal dimension $\varphi(q)/2 - 1$. 
Let us call this space the 
transcendental space and denote it by $V_{tr}$.
If we assume Schanuel's conjecture, all these values are 
algebraically independent. Recall that Schanuel's 
conjecture (see \cite{MR}, page 111) 
is the assertion that  for any collection of 
complex numbers $\alpha_1,  \cdots , \alpha_n$
that are linearly independent over $\Q$, the 
transcendence degree of the field
$$
{\Q}(\alpha_1,  \cdots , \alpha_n,  
e^{\alpha_1}, \cdots , e^{\alpha_n})
$$
over $\Q$ is at least $n$. 
\\

\item{} The transcendental space 
intersects $\overline{\Q}$ trivially, that is, 
$$ 
V_{tr} \cap \overline{\Q} = \{ 0 \}.
$$ 
This follows from \thmref{req}.
\\

\item{} Finally, we can  prove the following 
stronger assertion, namely that
the following sum 
$$
V_{ar} + V_{tr}+\overline{\Q}
$$ 
is direct.

Here is a proof  of this assertion. The  values
of $L(1,\chi)$ for non-trivial even characters $\chi$ are linear forms
in logarithms of real positive algebraic numbers. On the other hand,
 when $\chi$ is an odd character, $L(1,\chi)$ 
is an algebraic multiple of $\log(-1)$.  By Baker's theorem, 
any $\overline{\Q}$-relation involving 
logarithms of positive real algebraic 
numbers (from non-trivial even characters)  
and $\log (-1)$ will result in a
$\Z$-linear relation between these numbers. This will lead
to  a contradiction as $\log(-1) = i \pi$ is purely imaginary. 
This along with Theorem 2.2
proves that the above sum is direct.

\end{itemize}

\section{generalised Milnor conjecture over 
number fields intersecting $\Q(\zeta_q)$ trivially}

Let us first set some notations in relation to the extended Milnor conjecture
over number fields.
Let $\K$ be a number field and $k  > 1, q >2$ be integers. Let 
$\widehat{V}_k(q,\K)$ be the $\K$-linear space generated by the numbers
$$
1, ~\zeta(k,a/q), ~1\le a< q ~{\rm with} ~(a,q)=1.  
$$
We are interested in the dimension of this space. This as 
we shall see will depend
on the chosen number field $\K$. We first isolate 
the following two canonical subspaces 
of $\widehat{V}_k(q,\K)$, namely the $\K$-linear 
space spanned by the following sets of real numbers:
\begin{equation*}
\left\{\zeta(k,a/q)+(-1)^k \zeta(k,1-a/q) ~:~ (a,q)=1, ~ 1\leq a<q/2\right\},
\end{equation*}
which we refer to as the ``arithmetic  space'' and the space spanned by
\begin{equation*}
 \left\{\zeta(k,a/q)+ (-1)^{k+1} \zeta(k,1-a/q)  ~:~  (a,q)=1, ~ 1\leq a<q/2\right\}
\end{equation*}
which we call the ``transcendental space''. Let us denote them by $V_{ar}(\K)$
and $V_{tr}(\K)$ respectively.

We now state the following results which are of relevance in this set up.
First, one has  the following theorem of Okada \cite{TO} (see also \cite{MS}).

\begin{lem}\label{lem1}
Let $k$ and $q$ be positive integers with $k > 0$ and $q > 2$. 
Let $\rm T$ be a set
of $\varphi(q)/2$ representations mod $q$ such that the 
union ${\rm T} \cup (-{\rm T})$
constitutes a complete set of co-prime residue classes mod $q$. 
Let $\K$ be a number field such that ${\K} \cap \Q(\zeta_q) = \Q$.
Then the set of real numbers
$$
\frac{d^{k-1}}{dz^{k-1}} \cot (\pi z)|_{z=a/q}, ~~~~~~~~~~a \in T
$$
is linearly independent over $\K$.
\end{lem}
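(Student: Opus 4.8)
My plan is to convert this statement about transcendental real numbers into a purely algebraic linear independence statement inside the cyclotomic field $\Q(\zeta_q)$, and then to resolve that by Galois descent together with the non-vanishing of Dirichlet $L$-functions. The first step is to isolate the transcendental part. Writing $\pi\cot(\pi z)=\pi i\bigl(1+\frac{2}{e^{2\pi i z}-1}\bigr)$ and differentiating $k-1$ times, I would show that
\begin{equation*}
\frac{d^{k-1}}{dz^{k-1}}\cot(\pi z)\Big|_{z=a/q}=i^{k}\,\pi^{k-1}\,\gamma_a ,
\end{equation*}
where each $\gamma_a$ is an explicit element of $\Q(\zeta_q)$, namely a rational function of $\zeta_q^{a}$ whose denominator is a power of $\zeta_q^{a}-1$. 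The essential point is that although the individual cotangent values live a priori in $\Q(\zeta_q,i)\subseteq\Q(\zeta_{4q})$, the factor $i^{k}$ is global and can be pulled out, leaving $\gamma_a\in\Q(\zeta_q)$; this is exactly what permits the hypothesis to be phrased for $\Q(\zeta_q)$ rather than for $\Q(\zeta_{4q})$. Dividing a putative $\K$-relation by the nonzero scalar $i^{k}\pi^{k-1}$, the claim reduces to the linear independence over $\K$ of the algebraic numbers $\gamma_a\in\Q(\zeta_q)$, $a\in T$.

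Next I would perform the Galois descent. Since $\Q(\zeta_q)/\Q$ is abelian, the hypothesis $\K\cap\Q(\zeta_q)=\Q$ is equivalent to $\K$ and $\Q(\zeta_q)$ being linearly disjoint over $\Q$; hence any family of elements of $\Q(\zeta_q)$ that is linearly independent over $\Q$ remains linearly independent over $\K$. It therefore suffices to prove that $\{\gamma_a:a\in T\}$ is linearly independent over $\Q$. Here I would use the Galois action $\sigma_b(\zeta_q)=\zeta_q^{b}$, under which $\sigma_b(\gamma_a)=\gamma_{ab}$, so that $(\Z/q\Z)^{\times}$ permutes the full family $\{\gamma_a\}_{(a,q)=1}$ through its index. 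Applying every $\sigma_b$ to a relation $\sum_{a\in T}c_a\gamma_a=0$ and then forming, for each Dirichlet character $\chi$ mod $q$, the combination $\sum_b\overline\chi(b)\sigma_b(\cdot)$, the relation collapses to $\hat\gamma(\chi)\sum_{a\in T}c_a\chi(a)=0$, where $\hat\gamma(\chi):=\sum_{(a,q)=1}\overline\chi(a)\gamma_a$.

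The crux is the non-vanishing of these $\hat\gamma(\chi)$. The symmetry $\gamma_{-a}=(-1)^{k}\gamma_a$ forces $\hat\gamma(\chi)=0$ unless $\chi(-1)=(-1)^{k}$, and for exactly these $\varphi(q)/2$ characters I would identify $\hat\gamma(\chi)$ with a nonzero multiple of $L(k,\overline\chi)$: the factorization of the cotangent derivative shows $\gamma_a$ is proportional to $\zeta(k,a/q)+(-1)^{k}\zeta(k,1-a/q)$, and then the identity $\sum_{a}\overline\chi(a)\zeta(k,a/q)=q^{k}L(k,\overline\chi)$ together with a reindexing $a\mapsto q-a$ gives $\hat\gamma(\chi)\propto\bigl(1+(-1)^{k}\overline\chi(-1)\bigr)q^{k}L(k,\overline\chi)=2q^{k}L(k,\overline\chi)$ for the relevant $\chi$. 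Granting $L(k,\overline\chi)\neq0$, one gets $\sum_{a\in T}c_a\chi(a)=0$ for all $\chi$ with $\chi(-1)=(-1)^{k}$; since the square matrix $(\chi(a))_{\chi,\,a\in T}$ is nonsingular by character orthogonality, using that $T$ meets each pair $\{a,-a\}$ exactly once, all $c_a$ vanish.

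I expect this last non-vanishing input to be the main obstacle. For $k\ge 2$ it is immediate from the Euler product, since $L(k,\overline\chi)=\prod_p(1-\overline\chi(p)p^{-k})^{-1}\neq 0$. The delicate case is $k=1$, where one must invoke Dirichlet's theorem that $L(1,\chi)\neq0$ for nontrivial $\chi$, and also verify that the principal character never appears with the wrong parity: it contributes only when $k$ is even, and there $L(k,\chi_0)$ is a nonzero multiple of $\zeta(k)$, so it causes no difficulty. With these points checked, the three reductions chain together to give the $\K$-linear independence of the cotangent derivatives.
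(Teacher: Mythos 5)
Your proposal is correct, but note that the paper offers no proof of this lemma to compare against: it is quoted as Okada's theorem, with citations to \cite{TO} (see also \cite{MS}), and the number-field form stated here comes from \cite{GMR1}. What you have done is reconstruct, essentially faithfully, the argument behind those citations. The factorisation $\frac{d^{k-1}}{dz^{k-1}}\cot(\pi z)\big|_{z=a/q}=i^k\pi^{k-1}\gamma_a$ with $\gamma_a\in\Q(\zeta_q)$ is right (write $\cot(\pi z)=i\bigl(1+\frac{2}{w-1}\bigr)$, $w=e^{2\pi i z}$, and use $\frac{d}{dz}=2\pi i\,w\frac{d}{dw}$, so the value is $i^k 2^{k-1}\pi^{k-1}$ times a rational function of $\zeta_q^a$). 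The descent step is exactly the role of the hypothesis $\K\cap\Q(\zeta_q)=\Q$: since $\Q(\zeta_q)/\Q$ is Galois, trivial intersection does give linear disjointness, and this reduction of the $\K$-statement to the $\Q$-statement is precisely the added content of the number-field version in \cite{GMR1}. The Fourier-analytic core --- applying $\sigma_b$, forming $\hat\gamma(\chi)=\sum_{(m,q)=1}\overline\chi(m)\gamma_m$, observing $\gamma_{-a}=(-1)^k\gamma_a$ kills the wrong-parity characters, and identifying $\hat\gamma(\chi)$ with a nonzero multiple of $L(k,\overline\chi)$ for $\chi(-1)=(-1)^k$ --- is Okada's mechanism (in determinant form, the nonvanishing of a Dedekind-type determinant $\prod_\chi\hat\gamma(\chi)$).

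Two points in your write-up need slightly more care, though neither is a genuine gap. First, for $k=1$ (which the lemma's hypothesis $k>0$ includes) the identity $\sum_a\overline\chi(a)\zeta(k,a/q)=q^kL(k,\overline\chi)$ cannot be invoked literally, since $\zeta(1,x)$ diverges; either work with $\zeta(s,a/q)$ near $s=1$, where the poles cancel for non-principal $\chi$, or better, evaluate $\hat\gamma(\chi)=\sum_a\overline\chi(a)\frac{\zeta_q^a+1}{\zeta_q^a-1}$ directly against the classical finite-sum expression for $L(1,\chi)$ with $\chi$ odd, where Dirichlet's nonvanishing enters as you say. Second, the nonsingularity of the half-size matrix $(\chi(a))_{\chi(-1)=(-1)^k,\ a\in T}$ deserves a line: for even parity it is the character table of $(\Z/q\Z)^{\times}/\{\pm1\}$ evaluated on the coset representatives $T$; for odd parity, twist by a fixed odd character $\psi$, which rescales each column by $\psi(a)\neq0$ and reduces to the even case (equivalently, extend $(c_a)$ to all of $(\Z/q\Z)^{\times}$ by $c_{-a}=(-1)^k c_a$ and conclude that all Fourier coefficients of this extension vanish). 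With these details filled in, your proof is complete and is, in substance, the proof the paper delegates to the literature.
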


We shall be frequently using the following identity (see \cite{MS}, for instance):
\begin{equation}\label{strong}
\zeta(k, a/q) + (-1)^k \zeta(k, 1-a/q) = \frac{(-1)^{k-1}}{(k-1)!} 
~ \frac{d^{k-1}}{dz^{k-1}} (\pi \cot \pi z) |_{z = a/q}.
\end{equation}

Finally,  one has the following result established in \cite{GMR1}:
\begin{lem}\label{lem2}
For any $ 1 \leq a < q/2$ with $(a,q) = 1$, the number 
$$
\frac{\zeta(k, a/q) + (-1)^k \zeta(k, 1-a/q)}{(i\pi)^k } 
$$ 
lies in the $q$-th cyclotomic field $\Q(\zeta_q)$.
\end{lem}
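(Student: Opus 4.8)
The plan is to start from the identity \eqref{strong}, which already expresses the combination $\zeta(k,a/q) + (-1)^k \zeta(k,1-a/q)$ as an explicit rational constant times the $(k-1)$-th derivative of $\pi \cot \pi z$ evaluated at $z = a/q$. Thus the whole question reduces to understanding the arithmetic nature of this derivative once the correct power of $i\pi$ is divided out.

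First I would pass to exponential coordinates. Writing $u = 2\pi i z$, one has
$$
\pi \cot \pi z = \pi i \, \frac{e^{u} + 1}{e^{u} - 1},
$$
and since $\frac{d}{dz} = 2\pi i \frac{d}{du}$, each differentiation in $z$ contributes a factor of $2\pi i$. A one-line induction shows that for every $n \geq 0$ the derivative $\frac{d^{n}}{du^{n}} \frac{e^u+1}{e^u - 1}$ is a rational function of $e^u$ with rational coefficients whose denominator is a power of $(e^u - 1)$; concretely one may write it as $Q_n(e^u)/(e^u - 1)^{n+1}$ with $Q_n \in \Z[x]$. Therefore
$$
\frac{d^{k-1}}{dz^{k-1}} (\pi \cot \pi z) = (2\pi i)^{k-1} \cdot \pi i \cdot \frac{Q_{k-1}(e^{2\pi i z})}{(e^{2\pi i z} - 1)^{k}}.
$$
Collecting the powers of $\pi i$ turns the prefactor $(2\pi i)^{k-1}\cdot \pi i$ into the clean expression $2^{k-1}(i\pi)^{k}$, so the right-hand side is $2^{k-1}(i\pi)^k$ multiplied by a rational function of $e^{2\pi i z}$ having rational coefficients.

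Next I would specialise to $z = a/q$. Because $(a,q) = 1$ and $0 < a < q$, the value $e^{2\pi i a/q} = \zeta_q^{a}$ is a $q$-th root of unity different from $1$; hence $e^{2\pi i a/q} - 1$ is a nonzero element of $\Q(\zeta_q)$, and the rational function above evaluates to an element of $\Q(\zeta_q)$. Substituting into \eqref{strong} and dividing by $(i\pi)^k$ cancels the transcendental factor exactly, leaving a rational multiple (coming from $\frac{(-1)^{k-1}}{(k-1)!}$ together with $2^{k-1}$) of this cyclotomic number. This is precisely the assertion of the lemma.

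The only genuine content is the inductive structural claim that each derivative of $\frac{e^u + 1}{e^u - 1}$ is a rational function in $e^u$ with rational coefficients and denominator a power of $(e^u - 1)$, together with the careful accounting of the powers of $i\pi$; both are routine. I expect the one point meriting attention to be the non-vanishing of the denominator at the relevant root of unity, which is guaranteed exactly by the coprimality condition $(a,q) = 1$.
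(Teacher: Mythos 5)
Your proof is correct, and it is a genuinely different route from the one the paper relies on. The paper does not prove Lemma~\ref{lem2} at all: it imports it from \cite{GMR1}, and the argument behind that citation (recalled explicitly in Section~4 of this paper) rests on the expansion $\frac{\zeta(k,a/q)+(-1)^k\zeta(k,1-a/q)}{(\pi i)^k} = A\sum_{b=1}^{q}\bigl(\zeta_q^{ab}+(-1)^k\zeta_q^{-ab}\bigr)B_k(b/q)$ with $A\in\Q$, i.e.\ on the Hurwitz formula and Bernoulli polynomials. You instead derive the lemma self-containedly from \eqref{strong}: writing $\pi\cot\pi z = \pi i\,\frac{e^u+1}{e^u-1}$ with $u=2\pi i z$, your induction $Q_{n+1}(x)=x\bigl[Q_n'(x)(x-1)-(n+1)Q_n(x)\bigr]$ is valid, the bookkeeping $(2\pi i)^{k-1}\cdot\pi i = 2^{k-1}(i\pi)^k$ is exact, and specialising at $z=a/q$ gives $\lambda_a = \frac{(-1)^{k-1}2^{k-1}}{(k-1)!}\,\frac{Q_{k-1}(\zeta_q^a)}{(\zeta_q^a-1)^k}\in\Q(\zeta_q)$, where the denominator is nonzero since $\zeta_q^a\neq 1$ (for this, $q\nmid a$ already suffices; full coprimality is more than you need). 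What the paper's route buys is the explicit Bernoulli coefficients, which Section~4 uses directly to read off the Galois action $\sigma_r(\lambda_a)=\lambda_{ar}$; note, though, that your closed form recovers the same equivariance just as transparently, since $\sigma_r$ sends $Q_{k-1}(\zeta_q^a)/(\zeta_q^a-1)^k$ to $Q_{k-1}(\zeta_q^{ar})/(\zeta_q^{ar}-1)^k$. Your route buys elementarity: nothing beyond the chain rule and an induction, with no appeal to the Hurwitz formula.
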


Now one can see that each generating element  
of  the arithmetic space $V_{ar}(\K)$ is actually transcendental. 
However, we call the space  $V_{ar}(\K)$
arithmetic as it still generates a one-dimensional space
over $\overline{\Q}$. This follows from \lemref{lem2}.

On the other hand, one expects all the generating elements 
of the transcendental space $V_{tr}(\K)$
to be algebraically independent and hence of dimension
$\varphi(q)/2$ over  $\overline{\Q}$. Note that the results
of the previous section supports such an expectation.

Having fixed these notations, we now consider 
the relatively accessible case,
namely when $\K \cap \Q(\zeta_q) = \Q$. In this case, we 
can prove the following lower bound for 
the dimension of $\widehat{V}_k(q,\K)$.

\smallskip
\begin{thm}\label{3}
Let $k > 1, q > 2$ be positive integers and $\K$ be a number 
field with $\K\cap \Q(\zeta_q)=\Q$. Then
\begin{equation*}
\dim_{\K}\widehat{V}_k(q,\K) \geq  \frac{\varphi(q)}{2} + 1. 
\end{equation*}
\end{thm}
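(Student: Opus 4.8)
The plan is to produce $\varphi(q)/2 + 1$ explicit elements of $\widehat{V}_k(q,\K)$ and show that they are linearly independent over $\K$. The natural candidates are the constant $1$ together with the $\varphi(q)/2$ generators $\zeta(k,a/q) + (-1)^k \zeta(k,1-a/q)$ of the arithmetic space $V_{ar}(\K)$. All of these lie in $\widehat{V}_k(q,\K)$: the number $1$ by definition, and each arithmetic generator because $\zeta(k,1-a/q) = \zeta(k,(q-a)/q)$ is again one of the defining values (as $q-a$ is coprime to $q$), so the generator is an integer combination of the $\zeta(k,a/q)$.

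First I would pin down $\dim_\K V_{ar}(\K) = \varphi(q)/2$. Taking $T = \{a : 1 \le a < q/2,\ (a,q)=1\}$, the union $T \cup (-T)$ is a complete set of coprime residues mod $q$, so \lemref{lem1} guarantees that the values $\frac{d^{k-1}}{dz^{k-1}}\cot(\pi z)|_{z=a/q}$ for $a \in T$ are $\K$-linearly independent. By the identity \eqref{strong}, each arithmetic generator equals the single nonzero constant $\frac{(-1)^{k-1}\pi}{(k-1)!}$ times the corresponding cotangent value; since this constant is the same for every $a$, the linear independence of the cotangent values transfers verbatim to the generators. Hence the $\varphi(q)/2$ generators of $V_{ar}(\K)$ are themselves $\K$-linearly independent.

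The crux is then to show that $1$ does not lie in the $\K$-span of these generators, which is where the transcendence of $\pi$ enters. Here I would invoke \lemref{lem2}: each generator equals $(i\pi)^k$ times an element of $\Q(\zeta_q) \subseteq \overline{\Q}$. Consequently any $\K$-linear combination of the generators takes the form $(i\pi)^k \gamma$ with $\gamma \in \overline{\Q}$. Were such a combination equal to $1$, then $(i\pi)^k \gamma = 1$; if $\gamma \neq 0$ this would force $(i\pi)^k = \gamma^{-1}$ to be algebraic, contradicting the transcendence of $\pi$, while $\gamma = 0$ gives $0 = 1$. Either way we reach a contradiction, so $1$ is not in the span.

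Putting these together, the $\varphi(q)/2$ arithmetic generators are $\K$-independent and $1$ lies outside their span, so the resulting $\varphi(q)/2 + 1$ elements are $\K$-linearly independent in $\widehat{V}_k(q,\K)$, which yields the stated bound. The only substantial external input is Okada's \lemref{lem1}, whose hypothesis $\K \cap \Q(\zeta_q) = \Q$ is exactly the assumption of the theorem; granting it, the remaining difficulty is ruling out a relation involving the constant term, and the factorisation through $(i\pi)^k$ furnished by \lemref{lem2} disposes of this cleanly.
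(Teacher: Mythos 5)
Your proposal is correct and follows essentially the same route as the paper: Okada's \lemref{lem1} combined with the identity \eqref{strong} gives the $\K$-linear independence of the $\varphi(q)/2$ arithmetic generators, and \lemref{lem2} together with the transcendence of $\pi$ shows $1$ lies outside their span. You simply make explicit some details the paper leaves implicit (the membership of the generators in $\widehat{V}_k(q,\K)$ and the scaling step transferring independence through the constant $\frac{(-1)^{k-1}\pi}{(k-1)!}$), which is fine.
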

\begin{proof}
By \lemref{lem1}, the  following $ \varphi(q)/2$ numbers 
$$
\frac{d^{k-1}}{dz^{k-1}} 
(\pi\cot\pi z)|_{z=a/q} , \phantom{m} 1\le a < q/2, ~~ (a,q)=1
$$
are linearly independent over $\K$ since $\K$ intersects $\Q(\zeta_q)$
trivially. Further, by \lemref{lem2}, each of these numbers 
$$
\zeta(k,a/q)+(-1)^k \zeta(k,1-a/q)
$$
is an algebraic multiple of $\pi^k$ and hence  $V_{ar}(\K)$ does
not contain $1$. Thus using the identity given by  \eqref{strong}, 
we have the lower bound mentioned
in the theorem. 
\end{proof}

Any improvement of the above lower bound 
for odd $k$ will have remarkable consequences. In 
particular, we have the following consequence 
which is not difficult to derive.

\smallskip

\begin{prop}\label{4}
Let $k>1$ be an odd integer. If 
${\rm dim}_{\K}\widehat{V}_k(4,\K)=3$ for 
all real number fields $\K$, then 
$\zeta(k)$ is transcendental. 
\end{prop}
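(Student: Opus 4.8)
The plan is to pin down the space $\widehat{V}_k(4,\K)$ explicitly and locate $\zeta(k)$ inside it. For $q=4$ the coprime residues are $1,3$, so the generators are $1,\zeta(k,1/4),\zeta(k,3/4)$, and since $k$ is odd the generator of the transcendental space $V_{tr}(\K)$ is $\zeta(k,1/4)+(-1)^{k+1}\zeta(k,3/4)=\zeta(k,1/4)+\zeta(k,3/4)$. The key step is to recognise this combination as a rational multiple of $\zeta(k)$ via the Hurwitz distribution relation $\sum_{a=1}^{q}\zeta(s,a/q)=q^{s}\zeta(s)$. Specialising to $q=4$ and using $\zeta(s,1/2)=(2^{s}-1)\zeta(s)$ together with $\zeta(s,1)=\zeta(s)$, I would deduce
\[
\zeta(k,1/4)+\zeta(k,3/4)=(4^{k}-2^{k})\,\zeta(k).
\]
Since $4^{k}-2^{k}=2^{k}(2^{k}-1)$ is a nonzero rational, $\zeta(k)$ itself lies in $\widehat{V}_k(4,\K)$ for every number field $\K\supseteq\Q$.

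I would then argue by contraposition. Suppose $\zeta(k)$ is algebraic; being a positive real number it is real algebraic, so $\K_{0}:=\Q(\zeta(k))$ is a real number field, and the hypothesis applies to it. Over $\K_{0}$ the displayed identity becomes a $\K_{0}$-linear relation among the three generators,
\[
\zeta(k,1/4)+\zeta(k,3/4)-(4^{k}-2^{k})\,\zeta(k)\cdot 1=0,
\]
whose coefficients $1,\,1,\,-(4^{k}-2^{k})\zeta(k)$ all lie in $\K_{0}$ and are not all zero, since $(4^{k}-2^{k})\zeta(k)\neq 0$. Hence these generators are $\K_{0}$-linearly dependent and $\dim_{\K_{0}}\widehat{V}_k(4,\K_{0})\le 2$, contradicting the assumption that the dimension equals $3$ for \emph{all} real number fields $\K$. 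Therefore $\zeta(k)$ must be transcendental.

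The only substantive ingredient is the distribution identity placing $\zeta(k)$ in the transcendental space; everything else is a clean specialisation of the ground field to $\Q(\zeta(k))$, so I do not expect a genuine obstacle so much as one point requiring care, namely the role of the parity of $k$. For odd $k$ the generator $\zeta(k,1/4)+\zeta(k,3/4)$ lands in $V_{tr}(\K)$ and is a rational multiple of $\zeta(k)$, which is what drives the argument; for even $k$, by contrast, one has $\zeta(k)\in\Q\,\pi^{k}$, so that both canonical generators collapse into the arithmetic space $V_{ar}(\K)$ (multiples of $\pi^{k}$), forcing $\dim_{\K}\widehat{V}_k(4,\K)\le 2$ and making the hypothesis unattainable. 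This explains why the statement is meaningful precisely for odd $k$, and why in that case the conclusion yields genuine information about $\zeta(k)$.
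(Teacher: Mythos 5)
Your main argument is correct, and since the paper states this proposition without proof (calling it ``not difficult to derive''), there is no written proof to diverge from: your route --- the distribution relation $\sum_{a=1}^{q}\zeta(s,a/q)=q^{s}\zeta(s)$ specialised to $q=4$, giving $\zeta(k,1/4)+\zeta(k,3/4)=(4^{k}-2^{k})\zeta(k)$, followed by taking $\K_{0}=\Q(\zeta(k))$, which is a genuine real number field under the assumption that $\zeta(k)$ is algebraic since $\zeta(k)$ is real and positive --- is plainly the intended one, and every step checks out. The resulting nontrivial $\K_{0}$-relation among the three generators $1,\zeta(k,1/4),\zeta(k,3/4)$ forces the dimension below $3$ and yields the contradiction.

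One warning about your closing paragraph, which contains a genuine error, though it sits outside the proof proper. For even $k$ the generator of $V_{tr}(\K)$ is the difference $\zeta(k,1/4)-\zeta(k,3/4)=4^{k}L(k,\chi_{-4})$, where $\chi_{-4}$ is the odd character mod $4$; for $k=2$ this is $16G$ with $G$ Catalan's constant. Nothing known makes this an algebraic multiple of $\pi^{k}$ (even the irrationality of $G/\pi^{2}$ is open), so it does not ``collapse into $V_{ar}(\K)$,'' and your claim that $\dim_{\K}\widehat{V}_{k}(4,\K)\le 2$ is forced for even $k$ is unjustified --- indeed it runs contrary to the extended Milnor conjecture, which predicts dimension $\varphi(4)+1=3$ for every $k>1$. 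What actually happens for even $k$ is that the sum $\zeta(k,1/4)+\zeta(k,3/4)$ is simultaneously the arithmetic-space generator and the rational multiple $(4^{k}-2^{k})\zeta(k)$, consistently with Euler's evaluation; the proposition is restricted to odd $k$ not because the hypothesis is unattainable for even $k$, but because for even $k$ the conclusion is already classical via Euler and Lindemann, so the statement carries no content there.
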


In this context, we have the following conditional
improvement of the above lower bound for odd $k$. 

\smallskip
\begin{thm}\label{6}
Let $k>1$ be an odd integer and $q,r >2$ be two co-prime integers. Also, let 
$\K$ be a real number field with discriminant
$d_{\K}$ co-prime to $qr$. Assume that $\zeta(k) \notin \K$. Then either 
\begin{eqnarray*}
{\rm dim}_{\K}\widehat{V}_k(q,\K) &\geq& \frac{\varphi(q)}{2} + 2 \\
\text{ or } \phantom{m} 
 {\rm dim}_{\K}\widehat{V}_k(r,\K) &\geq& \frac{\varphi(r)}{2} + 2.
\end{eqnarray*} 
\end{thm}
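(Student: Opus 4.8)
The plan is to argue by contradiction, combining the lower bound of \thmref{3} with the arithmetic of the cyclotomic fields $\Q(\zeta_q)$ and $\Q(\zeta_r)$. First I would dispose of the field theory: since $d_{\K}$ is coprime to $qr$, no prime dividing $qr$ ramifies in $\K$, whereas any subfield of $\Q(\zeta_{qr})$ other than $\Q$ ramifies at some prime dividing $qr$; hence $\K\cap\Q(\zeta_{qr})$ is unramified everywhere over $\Q$ and so equals $\Q$ by Minkowski's theorem. In particular $\K\cap\Q(\zeta_q)=\Q$ and $\K\cap\Q(\zeta_r)=\Q$, so \thmref{3} applies to both moduli and yields $\dim_{\K}\widehat{V}_k(q,\K)\ge\varphi(q)/2+1$ and likewise for $r$. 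Suppose, contrary to the assertion, that both dimensions take their minimal values $\varphi(q)/2+1$ and $\varphi(r)/2+1$.

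Next I would record two features of these spaces. Summing over reduced residues and using $\sum_{a=1}^{q}\zeta(k,a/q)=q^k\zeta(k)$ together with M\"obius inversion gives $\sum_{(a,q)=1}\zeta(k,a/q)=J_k(q)\,\zeta(k)$, where $J_k(q)=\sum_{d\mid q}\mu(d)(q/d)^k\neq 0$ is the Jordan totient; hence $\zeta(k)\in\widehat{V}_k(q,\K)$ and similarly $\zeta(k)\in\widehat{V}_k(r,\K)$. On the other hand, by the proof of \thmref{3} the subspace $\K\cdot 1+V_{ar}(\K)$ already has dimension $\varphi(q)/2+1$, so minimality forces $\widehat{V}_k(q,\K)=\K\cdot 1+V_{ar}(\K)$. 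By \lemref{lem2} each generator of $V_{ar}(\K)$ is $(i\pi)^k$ times an element of $\Q(\zeta_q)$, so every element of $V_{ar}(\K)$ lies in $(i\pi)^k\,\K(\zeta_q)$. Combining, I obtain a representation
\[
\zeta(k)=c_q+(i\pi)^k\,\delta_q,\qquad c_q\in\K,\ \delta_q\in\K(\zeta_q),
\]
and, from the modulus $r$, a representation $\zeta(k)=c_r+(i\pi)^k\,\delta_r$ with $c_r\in\K$ and $\delta_r\in\K(\zeta_r)$.

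The decisive step is to play these two representations against each other. Subtracting gives $(i\pi)^k(\delta_q-\delta_r)=c_r-c_q\in\K$; since $(i\pi)^k$ is transcendental while $c_r-c_q$ is algebraic, we must have $\delta_q=\delta_r=:\delta$ and $c_q=c_r$. Thus $\delta\in\K(\zeta_q)\cap\K(\zeta_r)$, and here is where both hypotheses are indispensable. Coprimality of $q$ and $r$ gives $\mathrm{Gal}(\Q(\zeta_{qr})/\Q)\cong\mathrm{Gal}(\Q(\zeta_q)/\Q)\times\mathrm{Gal}(\Q(\zeta_r)/\Q)$, and the disjointness $\K\cap\Q(\zeta_{qr})=\Q$ established above makes $\K$ and $\Q(\zeta_{qr})$ linearly disjoint over $\Q$, so the same product decomposition holds for $\mathrm{Gal}(\K(\zeta_{qr})/\K)$. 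Taking fixed fields of the two factors identifies $\K(\zeta_q)$ and $\K(\zeta_r)$ with the subfields fixed by $\{1\}\times\mathrm{Gal}(\Q(\zeta_r)/\Q)$ and $\mathrm{Gal}(\Q(\zeta_q)/\Q)\times\{1\}$ respectively; their intersection is fixed by the whole group, whence $\K(\zeta_q)\cap\K(\zeta_r)=\K$ and so $\delta\in\K$.

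Finally I would extract the contradiction. We now have $\zeta(k)=c_q+(i\pi)^k\delta$ with $c_q,\delta\in\K\subseteq\R$. Since $k$ is odd, $(i\pi)^k=\pm i\,\pi^k$ is purely imaginary and nonzero, so $(i\pi)^k\delta$ is purely imaginary while $\zeta(k)-c_q$ is real; comparing imaginary parts forces $\delta=0$, and then $\zeta(k)=c_q\in\K$, contradicting the hypothesis $\zeta(k)\notin\K$. This contradiction shows the two dimensions cannot both be minimal, which is exactly the claimed dichotomy. I expect the main obstacle to be the linear disjointness step $\K(\zeta_q)\cap\K(\zeta_r)=\K$, since it is the only place where the coprimality of $q,r$ and the coprimality of $d_{\K}$ with $qr$ are used simultaneously and essentially; the remaining steps are routine once \lemref{lem1}, \lemref{lem2} and \thmref{3} are in hand.
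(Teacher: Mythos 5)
Your proposal is correct and follows essentially the same route as the paper's own proof: assume both dimensions are minimal, use the Euler-product/M\"obius identity to place $\zeta(k)$ in both spaces, write $\zeta(k)=\alpha+(i\pi)^k\delta$ with $\delta$ in $\K(\zeta_q)$ respectively $\K(\zeta_r)$, invoke the transcendence of $\pi$ to equate the two representations, and conclude via $\K(\zeta_q)\cap\K(\zeta_r)=\K$ and the reality of $\K$. The only difference is that you explicitly justify the two field-theoretic facts the paper asserts without proof --- that $(d_{\K},qr)=1$ forces $\K\cap\Q(\zeta_{qr})=\Q$ (via ramification and Minkowski) and that this yields $\K(\zeta_q)\cap\K(\zeta_r)=\K$ (via linear disjointness and the Galois product decomposition) --- which is a welcome addition, not a departure.
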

\begin{proof}
Suppose not. Then by the above theorem, we have
\begin{equation*}
 {\rm dim}_{\K}\widehat{V}_k(q,\K)= \frac{\varphi(q)}{2}+1
\end{equation*}
and
\begin{equation*}
 {\rm dim}_{\K}\widehat{V}_k(r,\K)= \frac{\varphi(r)}{2}+1.
\end{equation*}
Now for the first case, the numbers 
\begin{equation*}
1,  ~\zeta(k,a/q)-\zeta(k,1-a/q), ~ {\rm where} ~(a,q)=1, ~ 1\leq a<q/2
\end{equation*}
generate $\widehat{V}_k(q,\K)$ over $\K$. 
Since $k$ is odd, we have 
\begin{equation}\label{6_1}
 \frac{\zeta(k,a/q)-\zeta(k,1-a/q)}{(\pi i)^k}\in \Q(\zeta_q)\subseteq \K(\zeta_q).
\end{equation}
Now consider the identity 
\begin{equation*}
\zeta(k)\underset{p ~{\rm prime}, \atop p|q}{\prod}(1-p^{-k}) ~=~
q^{-k}\sum_{\substack{a=1 \\ (a,q)=1}}^{q-1}\zeta(k,a/q) \in \widehat{V}_k(q,\K).
\end{equation*}
Thus $\zeta(k)\in \widehat{V}_k(q,\K)$ and hence 
$$
\zeta(k)= \alpha_1 + \sum_{\substack{(a,q)=1 \\ 1\leq a<q/2}}
\beta_a\left[\zeta(k,a/q)-\zeta(k,1-a/q)\right]
$$
for some $ \alpha_1, \beta_a \in \K$.
Using $\eqref{6_1}$
$$
a_1:=\frac{\zeta(k) - \alpha_1}{i\pi^k} \in \K(\zeta_q).
$$
Similarly, 
\begin{equation*}
 {\rm dim}_{\K}\widehat{V}_k(r,\K)= \frac{\varphi(r)}{2}+1
\end{equation*}
implies
\begin{equation}\label{6_3}
 a_2:= \frac{\zeta(k)-\alpha_2}{i\pi^k} \in \K(\zeta_r)
\end{equation}
with  $\alpha_2 \in \K.$ Thus,
\begin{equation*}
 a_1 i\pi^k+\alpha_1=a_2 i\pi^k+\alpha_2
\end{equation*}
which implies
\begin{equation*}
(a_1-a_2)i\pi^k =\alpha_2-\alpha_1 .
\end{equation*}
Transcendence of $\pi$ implies that $\alpha_1=\alpha_2$, $a_1=a_2$
and hence
\begin{equation*}
 \frac{\zeta(k)-\alpha_1}{i\pi^k}\in \K(\zeta_q)\cap \K(\zeta_r)=\K
\end{equation*}
because $(d_{\K}, qr) = 1$. Since $\K \subset \R$, $\zeta(k)=\alpha_1\in \K$,
a contradiction.
This completes the proof of the theorem.
\end{proof}

We end the section by proposing what we believe should be
the  extended Milnor conjecture for number fields $\K$
that intersect $\Q(\zeta_q)$ trivially, namely:

\bigskip
{\it The dimension of the $\K$-linear space
$\widehat{V}_k(q,\K)$ when $\K \cap \Q(\zeta_q) = \Q$
is equal to $\varphi(q) + 1$}.

\bigskip
When  $\K$ intersects $\Q(\zeta_q)$
non-trivially, the situation is more involved and this is
the content of the next section.

\section{Extended Milnor conjecture over 
number fields intersecting $\Q(\zeta_q)$ non-trivially}

In this section, we consider the case when
the ambient number field $\K$ intersects
$\Q(\zeta_q)$ non-trivially. The difficulty here
is that the result of Okada is no longer valid which
precludes us from concluding about the dimension
of the arithmetic space $V_{ar}(\K)$. 

Here we have the following theorem.

\begin{thm}
Let $k > 1, q > 2$ be integers. For $1 \le a < q/2, ~(a,q)=1$, 
let  $\lambda_a $ be defined as
$$
\lambda_a := \frac{\zeta(k,a/q) + (-1)^k\zeta(k,1-a/q)}{ (\pi i)^k}.
$$
If $\lambda_a \in \K$ for some $a$ as above, then
$$
2 \leq ~{\rm dim}~ \widehat{V}_k(q,\K) \leq \frac{\varphi(q)}{2}+2.
$$
\end{thm}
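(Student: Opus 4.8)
The plan is to reduce the whole question to the two canonical subspaces $V_{ar}(\K)$ and $V_{tr}(\K)$, and then to exploit the hypothesis $\lambda_a\in\K$ via the Galois action on the arithmetic space. I would begin by recording the decomposition
\[
\widehat{V}_k(q,\K) = \K\cdot 1 + V_{ar}(\K) + V_{tr}(\K),
\]
which follows by solving for the Hurwitz values from the two generating families: for $1\le a<q/2$ with $(a,q)=1$, the sum of the arithmetic generator $\zeta(k,a/q)+(-1)^k\zeta(k,1-a/q)$ and the transcendental generator $\zeta(k,a/q)+(-1)^{k+1}\zeta(k,1-a/q)$ is $2\zeta(k,a/q)$, while their difference is $2(-1)^k\zeta(k,1-a/q)$. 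Since the residues $a/q$ and $(q-a)/q$ exhaust all coprime classes in $(0,1)$, every generator of $\widehat{V}_k(q,\K)$ lies in the right-hand side.

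For the lower bound I would note that by \lemref{lem2} the element $\zeta(k,a/q)+(-1)^k\zeta(k,1-a/q)=\lambda_a(\pi i)^k$ is an algebraic multiple of $\pi^k$, and that $\lambda_a\neq 0$: applying \lemref{lem1} with $\K=\Q$ (which trivially meets $\Q(\zeta_q)$ in $\Q$) shows the relevant cotangent derivative is nonzero, so by \eqref{strong} the number $\lambda_a(\pi i)^k$ is a nonzero transcendental element of $\widehat{V}_k(q,\K)$. As $\K$ consists of algebraic numbers, $1$ and $\lambda_a(\pi i)^k$ are $\K$-linearly independent, giving $\dim\ge 2$.

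The heart of the matter is the upper bound, where I would show that the hypothesis forces $\dim_{\K}V_{ar}(\K)=1$. The key input is the explicit Galois action: expanding $\pi\cot\pi z$ in $u=e^{2\pi i z}$ gives $\frac{d^{k-1}}{dz^{k-1}}(\pi\cot\pi z)|_{z=a/q}=(2\pi i)^k R(\zeta_q^{\,a})$ for a fixed rational function $R$ with rational coefficients, whence $\lambda_a=c\,R(\zeta_q^{\,a})$ with $c\in\Q$ and $\sigma_b(\lambda_a)=\lambda_{ab}$, where $\sigma_b\in\mathrm{Gal}(\Q(\zeta_q)/\Q)$ is the automorphism $\zeta_q\mapsto\zeta_q^{\,b}$. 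Up to the relation $\lambda_{q-a}=(-1)^k\lambda_a$, the full Galois orbit of $\lambda_a$ is therefore the entire family $\{\lambda_b\}$. Now set $L=\K\cap\Q(\zeta_q)$; since $\Q(\zeta_q)/\Q$ is abelian, $L/\Q$ is Galois and hence contains every $\Q$-conjugate of each of its elements. Because $\lambda_a\in\K$ and $\lambda_a\in\Q(\zeta_q)$ by \lemref{lem2}, we have $\lambda_a\in L$, so each conjugate $\sigma_b(\lambda_a)=\lambda_{ab}$ lies in $L\subseteq\K$; letting $b$ range over the units shows $\lambda_b\in\K$ for every admissible $b$.

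Consequently each arithmetic generator equals $\lambda_b(\pi i)^k$ with $\lambda_b\in\K$, so $V_{ar}(\K)=\K\cdot(\pi i)^k$ is one-dimensional, and subadditivity of dimension gives
\[
\dim\widehat{V}_k(q,\K)\le \dim(\K\cdot 1)+\dim V_{ar}(\K)+\dim V_{tr}(\K)\le 1+1+\frac{\varphi(q)}{2},
\]
using the trivial bound $\dim V_{tr}(\K)\le\varphi(q)/2$ from its $\varphi(q)/2$ generators. I expect the main obstacle to be precisely this propagation step in the upper bound, namely that $\lambda_a\in\K$ for a single $a$ forces $\lambda_b\in\K$ for all $b$. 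It rests entirely on the Galois-stability of $L=\K\cap\Q(\zeta_q)$ together with the explicit identification of the $\lambda_b$ as the Galois conjugates of $\lambda_a$, and it is here that the arithmetic of $\K$, through its intersection with the cyclotomic field, genuinely enters.
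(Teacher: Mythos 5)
Your proof is correct and follows essentially the same route as the paper: both deduce the upper bound by noting that $\lambda_a$ lies in $K=\K\cap\Q(\zeta_q)$, which is Galois over $\Q$ as a subfield of an abelian extension, so the conjugates $\sigma_b(\lambda_a)=\lambda_{ab}$ (up to the sign relation $\lambda_{q-a}=(-1)^k\lambda_a$) force every $\lambda_c$ into $\K$, making $V_{ar}(\K)$ one-dimensional, with the lower bound coming from $1$ together with a nonzero transcendental element of $V_{ar}(\K)$. The only cosmetic difference is that you justify the equivariance $\sigma_b(\lambda_a)=\lambda_{ab}$ via the expansion of $\pi\cot\pi z$ in $u=e^{2\pi iz}$, whereas the paper quotes the Bernoulli-polynomial formula for $\lambda_a$ from \cite{GMR}, and you spell out details (nonvanishing of $\lambda_a$ via Okada's theorem, the explicit decomposition $\widehat{V}_k(q,\K)=\K\cdot 1+V_{ar}(\K)+V_{tr}(\K)$) that the paper leaves implicit.
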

\begin{proof}
We first recall that (see \cite{GMR})
$$\frac{\zeta(k,a/q) +(-1)^k \zeta(k,1-a/q)}{(\pi i)^k}
= A \sum_{b=1}^{q} \left(\zeta_q^{ab} +(-1)^k \zeta_q^{-ab}\right)
B_k(b/q)$$
where $B_k(x)$ is the $k$-th Bernoulli polynomial and $A$
is a rational number.
Suppose $\lambda_a  \in \K$. Then  $\lambda_a  \in K := \K \cap \Q(\zeta_q)$.
Since $K$ is Galois (in fact abelian) over $\Q$, every element
of the Galois group $G = \text{ Gal}(\Q(\zeta_q)/\Q)$ when restricted to 
$K$ gives an automorphism
of $K$. Note that for any $(r,q)=1$, the corresponding element $\sigma_r$ of 
$G$, given by the action $\zeta_q \to \zeta_q^r$), takes
$\lambda_a$ to $\lambda_{ar}$. Hence  $\lambda_c \in K$
for all $(c,q)=1$ with  $1 \leq c < q/2$. Now the upper bound is obvious as 
$V_{ar}(\K)$ is of dimension one over $\K$ and because $1 \notin V_{ar}(\K)$.
This also gives the lower bound.
\end{proof}

As a corollary, we have
\smallskip
\begin{cor}\label{1}
For $k>1, q>2$, we have
 $$
 2 \leq ~{\rm dim}~ \widehat{V}_k(q,\Q(\zeta_q)) 
 \leq \frac{\varphi(q)}{2} + 2.
 $$
\end{cor}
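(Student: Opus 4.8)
The final statement to prove is Corollary~\ref{1}, which asserts that for $k>1,\,q>2$,
$$
2 \leq \dim \widehat{V}_k(q,\Q(\zeta_q)) \leq \frac{\varphi(q)}{2}+2.
$$
The plan is to derive this directly as a special case of the preceding theorem by taking $\K = \Q(\zeta_q)$. The entire thrust of that theorem was to show that the bounds hold whenever $\lambda_a \in \K$ for at least one admissible $a$, so the only thing I need to verify is that this hypothesis is automatically satisfied when the ambient field is the full cyclotomic field $\Q(\zeta_q)$ itself.

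First I would recall from \lemref{lem2} that for every $a$ with $1 \le a < q/2$ and $(a,q)=1$, the normalised value
$$
\lambda_a = \frac{\zeta(k,a/q) + (-1)^k \zeta(k,1-a/q)}{(\pi i)^k}
$$
already lies in $\Q(\zeta_q)$. Thus when $\K = \Q(\zeta_q)$ we have $\lambda_a \in \K$ not merely for some $a$ but for \emph{all} admissible $a$; in particular the hypothesis of the theorem is met. Applying the theorem verbatim with this choice of $\K$ then yields
$$
2 \leq \dim \widehat{V}_k(q,\Q(\zeta_q)) \leq \frac{\varphi(q)}{2}+2,
$$
which is exactly the assertion of the corollary.

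I do not anticipate a genuine obstacle here, since the corollary is essentially a direct specialisation. The only point that warrants a sentence of care is confirming that the containment furnished by \lemref{lem2} is precisely the hypothesis ``$\lambda_a \in \K$'' appearing in the theorem; this is immediate once one sets $\K = \Q(\zeta_q)$, because then $K := \K \cap \Q(\zeta_q) = \Q(\zeta_q) = \K$, and the Galois-descent argument of the theorem collapses trivially (every $\lambda_c$ is already inside the field). The upper bound comes from the one-dimensionality of $V_{ar}(\K)$ over $\K$ together with $1 \notin V_{ar}(\K)$, and the lower bound follows since $V_{ar}(\K)$ contributes one dimension and $1$ contributes another independent one. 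Hence the result is obtained with no additional work beyond invoking \lemref{lem2} and the theorem.
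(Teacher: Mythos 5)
Your proposal is correct and matches the paper's approach exactly: the paper states Corollary~\ref{1} as an immediate specialisation of the preceding theorem with $\K = \Q(\zeta_q)$, where the hypothesis $\lambda_a \in \K$ is guaranteed for all admissible $a$ by \lemref{lem2}. Your additional remarks on why the Galois-descent step becomes trivial and where each bound comes from are accurate and consistent with the theorem's proof.
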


To get an idea of the
difficulty, we now give an instance where the dimension
of  $V_{ar}(\K)$ does not go down even when 
 $\K$ intersects $\Q(\zeta_q)$ non-trivially. 

\begin{thm}\label{imp}
Let $k > 1, q > 2$ be a natural number and $\K = \Q(i\sqrt{d})$ 
for some square-free natural number $d \ge 1$.  If
$\K \cap \Q(\zeta_q) = \Q(i\sqrt{d})$, then
$$
{\rm dim}_{\K} ~~ V_{ar}(\K)   = \varphi(q)/2
$$ 
and thus
\begin{equation*}
\dim_{\K}\widehat{V}_k(q,\K) \geq  \frac{\varphi(q)}{2} + 1. 
\end{equation*}
\end{thm}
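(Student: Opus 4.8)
The plan is to reduce everything to Okada's theorem over the base field $\Q$ together with the fact that $\K=\Q(i\sqrt d)$ is imaginary quadratic, so that $\K\cap\R=\Q$. First I would record, via the identity \eqref{strong}, that each generator
\[
v_a := \zeta(k,a/q)+(-1)^k\zeta(k,1-a/q)
\]
of $V_{ar}(\K)$ is a fixed nonzero rational multiple of $\frac{d^{k-1}}{dz^{k-1}}(\pi\cot\pi z)|_{z=a/q}$, and in particular that each $v_a$ is a real number. Since there are exactly $\varphi(q)/2$ such $a$ in the range $1\le a<q/2$ with $(a,q)=1$, the inequality $\dim_\K V_{ar}(\K)\le\varphi(q)/2$ is automatic; the whole content of the theorem is the reverse inequality, namely the $\K$-linear independence of the $v_a$.

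The main obstacle is that the natural tool, Okada's theorem (\lemref{lem1}), guarantees $\K$-linear independence only under the hypothesis $\K\cap\Q(\zeta_q)=\Q$, which fails precisely here since $\K\cap\Q(\zeta_q)=\K$. The idea for circumventing this is that \lemref{lem1} does apply over $\Q$ (as $\Q\cap\Q(\zeta_q)=\Q$ trivially), so the real numbers $v_a$ are already linearly independent over $\Q$; the imaginary quadratic structure of $\K$ will let me descend any $\K$-relation to $\Q$-relations.

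Concretely, I would suppose $\sum_a c_a v_a=0$ with $c_a\in\K$ and write $c_a=p_a+q_a\,i\sqrt d$ with $p_a,q_a\in\Q$. Since each $v_a$ is real while $i\sqrt d$ is purely imaginary, comparing real and imaginary parts forces
\[
\sum_a p_a v_a=0 \qquad\text{and}\qquad \sum_a q_a v_a=0,
\]
two $\Q$-linear relations among the $v_a$. The $\Q$-independence just established then gives $p_a=q_a=0$ for every $a$, so $c_a=0$; equivalently, one may apply complex conjugation to the relation (it fixes each real $v_a$ and maps $\K$ onto itself with fixed field $\Q$) and take sums and differences. This yields $\dim_\K V_{ar}(\K)=\varphi(q)/2$.

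For the final inequality I would note $V_{ar}(\K)\subseteq\widehat V_k(q,\K)$ and that $1\in\widehat V_k(q,\K)$ lies outside $V_{ar}(\K)$: by \lemref{lem2} every element of $V_{ar}(\K)$ has the form $(i\pi)^k\beta$ with $\beta\in\Q(\zeta_q)$, which cannot equal $1$ by the transcendence of $\pi$. Adjoining $1$ to a basis of $V_{ar}(\K)$ therefore raises the dimension by one, exactly as in the proof of \thmref{3}, giving $\dim_\K\widehat V_k(q,\K)\ge\varphi(q)/2+1$. I expect the only genuinely delicate point to be the reduction in the second step—recognizing that the failure of Okada over $\K$ is harmless once one exploits $\K\cap\R=\Q$; the remaining manipulations are formal.
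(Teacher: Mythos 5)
Your proposal is correct and follows essentially the same route as the paper: the authors likewise write each $\K$-coefficient as $\alpha_a + i\sqrt{d}\,\beta_a$ with $\alpha_a,\beta_a\in\Q$, use the realness of the generators $\zeta(k,a/q)+(-1)^k\zeta(k,1-a/q)$ together with Okada's theorem (\lemref{lem1} applied over $\Q$) to force both rational relations to vanish, and then obtain the final inequality from the transcendence of $\pi^k$, just as you argue via \lemref{lem2}. Your write-up merely makes explicit the real/imaginary part separation and the exclusion of $1$ from $V_{ar}(\K)$, which the paper leaves terse.
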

\begin{proof}
Write
$$ 
\lambda_a' := \zeta(k,a/q)+(-1)^k\zeta(k,1-a/q),
$$
where $(a,q)=1$ with $1 \le a < q/2$. If these numbers are linearly dependent
over $\K$,  then
$$
\sum_{a} (\alpha_a + i\sqrt{d}~\beta_a) \lambda_a' = 0,
$$
where $\alpha_a, \beta_a$ are rational numbers. Since by Okada's
theorem the numbers $\lambda_a'$'s are linearly independent over $\Q$,
we have $\alpha_a= 0 = \beta_a$ for all such $a$.
Then the theorem follows by noticing that $\pi^k \not\in \overline{\Q}$.
\end{proof}

As indicated earlier, the dimension of the space  
$\widehat{V}_k(q,\K)$ for odd
$k$ is particularly important. Here  one has the following
proposition.

\smallskip
\begin{prop}\label{2}
There exists an integer $q_0 > 2$ such 
that for all integers $q > 2$ with $(q_0, q)=1$, 
the dimension of the space $\widehat{V}_k(q,\Q(\zeta_q))$ 
is at least 3 for infinitely many odd $k$. 
\end{prop}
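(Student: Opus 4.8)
The plan is to pin down the two ``obvious'' dimensions first and then to extract a third one from an odd zeta value. I would begin by observing that $\widehat{V}_k(q,\Q(\zeta_q))$ always contains $1$ together with $(\pi i)^k$: by the identity \eqref{strong} and \lemref{lem2}, each arithmetic generator $\zeta(k,a/q)+(-1)^k\zeta(k,1-a/q)$ is a nonzero element of $\Q(\zeta_q)\cdot(\pi i)^k$ (nonzero because the cotangent derivatives do not vanish and, by Okada, are even $\Q$-linearly independent), so $(\pi i)^k$ itself lies in the space; since $(\pi i)^k$ is transcendental, $1$ and $(\pi i)^k$ are linearly independent over $\Q(\zeta_q)$. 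Next, the Euler-factor identity $\zeta(k)\prod_{p\mid q}(1-p^{-k})=q^{-k}\sum_{(a,q)=1}\zeta(k,a/q)$ shows $\zeta(k)\in\widehat{V}_k(q,\Q(\zeta_q))$. Hence the dimension is at least $3$ as soon as $\zeta(k)\notin U_k:=\Q(\zeta_q)\cdot 1+\Q(\zeta_q)\cdot(\pi i)^k$, and the whole problem reduces to producing infinitely many odd $k$ with $\zeta(k)\notin U_k$.

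For odd $k$ the coefficients can be stripped down to real algebraic numbers. Suppose $\zeta(k)=c_1+c_2(\pi i)^k$ with $c_1,c_2\in\Q(\zeta_q)$. Applying complex conjugation, which fixes the real number $\zeta(k)$, sends $(\pi i)^k$ to $-(\pi i)^k$ (as $k$ is odd) and acts on $\Q(\zeta_q)$ as $\sigma_{-1}$, and then using that $1,(\pi i)^k$ are independent over $\overline{\Q}$, I would conclude that $c_1$ is real and $c_2$ is purely imaginary. Thus $\zeta(k)\in U_k$ is equivalent to a relation $\zeta(k)=a_k+\gamma_k\pi^k$ with $a_k,\gamma_k$ lying in the fixed real abelian field $F:=\Q(\zeta_{\mathrm{lcm}(q,4)})\cap\R$; moreover $a_k,\gamma_k$ are \emph{uniquely} determined, since $1,\pi^k$ are linearly independent over $\overline{\Q}$.

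It therefore remains to rule out, for infinitely many odd $k$, that $\zeta(k)\in F+F\pi^k$. This is the step I expect to carry all the difficulty: it asks to separate the single value $\zeta(k)$ from the power $\pi^k$ over a number field, which for an individual $k$ is exactly the sort of statement (a cousin of the irrationality of $\zeta(2d+1)/\pi^{2d+1}$) that lies beyond present technology, so that irrationality of $\zeta(k)$ alone does not suffice. The route through it is to use the whole family at once, together with an unconditional linear-independence input of Ball--Rivoal type in the number-field form recorded in \cite{GMR1}: the odd values $\zeta(3),\zeta(5),\dots$ cannot all be trapped, with coefficients of bounded degree, in the two-dimensional spaces $F+F\pi^k$. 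Here the integer $q_0$ enters: choosing $q_0$ to absorb the relevant ramification, the coprimality $(q_0,q)=1$ forces $\Q(\zeta_q)\cap\Q(\zeta_{q_0})=\Q$, and, combined with the uniqueness of the representation $\zeta(k)=a_k+\gamma_k\pi^k$, this is what should descend the cyclotomic coefficients $a_k,\gamma_k$ to $\Q$ and reduce the needed independence to a statement over $\Q$ to which the input applies.

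Assembling these, I would argue by contradiction: if for some $q$ with $(q_0,q)=1$ the dimension were at most $2$ for all large odd $k$, then $\zeta(k)\in F+F\pi^k$ for all such $k$, contradicting the independence input. Hence $\dim\widehat{V}_k(q,\Q(\zeta_q))\ge 3$ for infinitely many odd $k$, as claimed. The genuinely hard ingredient is the quantitative independence of odd zeta values over number fields; everything else is the elementary bookkeeping of the arithmetic and transcendental subspaces together with the conjugation symmetry.
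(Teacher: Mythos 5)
Your reduction is sound up to the crux: the space contains $1$ and $(\pi i)^k$ (the arithmetic generators are nonzero $\Q(\zeta_q)$-multiples of $(\pi i)^k$ by \lemref{lem2}, \eqref{strong} and Okada), it contains $\zeta(k)$ by the Euler-factor identity, and dimension $\geq 3$ follows once $\zeta(k)\notin U_k=\Q(\zeta_q)+\Q(\zeta_q)(\pi i)^k$; the conjugation step showing $c_1$ real and $c_2$ purely imaginary is also correct. But the step you outsource --- an ``unconditional linear-independence input of Ball--Rivoal type in the number-field form recorded in \cite{GMR1}'' asserting that the odd zeta values cannot all lie in $F+F\pi^k$ --- does not exist, in \cite{GMR1} or anywhere. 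For a single odd $k$, nothing known rules out even $\zeta(k)\in\Q+\Q\,\pi^k$: such a statement would settle irrationality questions about $\zeta(k)/\pi^k$ of exactly the kind the paper flags as beyond current technology (see the remarks after \thmref{3} and \propref{4}). Rivoal \cite{TR} gives irrationality of $\zeta(k)$ for infinitely many odd $k$, and Ball--Rivoal gives growth of $\dim_\Q\langle 1,\zeta(3),\dots,\zeta(2n+1)\rangle$; neither contradicts $\zeta(k)=a_k+\gamma_k\pi^k$ with algebraic coefficients, since the powers $\pi^3,\pi^5,\dots$ are themselves $\Q$-linearly independent. A secondary flaw: your final assembly assumes $\dim\le 2$ only at the single modulus $q$, whereas the descent through $\Q(\zeta_q)\cap\Q(\zeta_{q_0})=\Q$ that you gesture at requires the dimension-two hypothesis at \emph{both} moduli simultaneously for the same $k$.

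The fatal move is that you realified too early, and this is exactly where the paper's proof diverges. Keep the representation as $\zeta(k)=\alpha_1+\alpha_2\, i\pi^k$ with $\alpha_1,\alpha_2\in\Q(\zeta_q)$ (for odd $k$, $(i\pi)^k=\pm i\pi^k$), assume dimension $2$ at two coprime moduli $q$ and $r$, and use uniqueness of the coefficients (transcendence of $\pi$) to force $\alpha_1,\alpha_2\in\Q(\zeta_q)\cap\Q(\zeta_r)=\Q$. Now $\alpha_2$ is real rational, so $\alpha_2 i\pi^k$ is purely imaginary while $\zeta(k)-\alpha_1$ is real; hence $\alpha_2=0$ and $\zeta(k)=\alpha_1\in\Q$, contradicting Rivoal. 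Equivalently, in your notation: descend $c_2$, which you already proved purely imaginary, to $\Q$; a purely imaginary rational is zero. The only transcendence input needed is Rivoal's theorem over $\Q$, and the ``hard step'' you flagged dissolves because after descent the coefficient of $\pi^k$ is pinned to be simultaneously real and purely imaginary. In your version, having absorbed $i^k$ into a real algebraic $\gamma_k\in F$, that leverage is irretrievably lost: descent then only yields $\gamma_k\in\Q$, from which no contradiction can be extracted by any known result.
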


\begin{proof}
Suppose that for any two co-prime integers $q$ and $r$, we have 
\begin{equation*}
\dim\widehat{V}_k(q,\Q(\zeta_q))=2 ~
{\rm and} ~ \dim\widehat{V}_k(r, \Q(\zeta_r))=2.
\end{equation*}
As $k$ is an odd integer, we have
\begin{equation*}
 \zeta(k,a/q)-\zeta(k,1-a/q)\in i \pi^k\Q(\zeta_q)
\end{equation*}
for all $1\le a<q/2$ with $(a,q)=1$ and
\begin{equation*}
\zeta(k, b/r)-\zeta(k,1- b/r)\in i \pi^k\Q(\zeta_r) 
\end{equation*}
for all $1\le b < r/2$ with $(b, r)=1$.
Hence the spaces $\widehat{V}_k(q,\Q(\zeta_q))$ and 
$\widehat{V}_k(r,\Q(\zeta_r))$ are generated by $1$ and $i\pi^k$ 
over $\Q(\zeta_q)$ and $\Q(\zeta_r)$ respectively.

Again we know that $\zeta(k)$ belongs to both the
spaces $\widehat{V}_k(q,\Q(\zeta_q))$ and 
$\widehat{V}_k(r,\Q(\zeta_r))$.
Hence $\zeta(k)$ can be written as
\begin{equation}\label{*}
 \zeta(k)= \alpha_1 + \alpha_2 i\pi^k = \beta_1 + \beta_2 i\pi^k
\end{equation}
for some $\alpha_1, \alpha_2\in \Q(\zeta_q)$ 
and $\beta_1, \beta_2\in\Q(\zeta_r)$.
Thus we have 
\begin{equation*}
(\alpha_2 -  \beta_2)i\pi^k = \beta_1 - \alpha_1. 
\end{equation*}
Transcendence of $\pi$ implies that 
$\alpha_1= \beta_1$ and $\alpha_2 = \beta_2$. 
As $\Q(\zeta_q)\cap\Q(\zeta_r)=\Q$,
we see that both $\alpha_1, \alpha_2$ are rational numbers.
Then by \eqref{*}, it follows that 
$\zeta(k)$ is necessarily rational.
By the work of Rivoal \cite{TR}, we know that there 
are infinitely many
odd $k$ such that $\zeta(k)$ is irrational. 
Thus we have the proposition. 
\end{proof}

We summarise the issues involved in the number field version
of the extended Milnor conjecture. This is modelled upon 
our experience in relation to the corresponding questions 
involving the interrelation among the values of $L(1,\chi)$
as discussed in Section 2.\\

\begin{itemize}

\item{} 
It is clear that $V_{ar}(\K) \cap \overline{\Q} = \{ 0 \}$. 
However the dimension of $V_{ar}(\K)$ over $\K$ is 
most likely the only parameter which depends
 on the ambient number field $\K$. As we noticed,
 $V_{ar}(\K)$ is a one-dimensional 
vector space over  $\overline{\Q}$. The dimension
of the arithmetic space does not seem to 
have any transcendental input.
\\
\item{} 
One expects that the elements of the generating set of $V_{tr}(\K)$
are linearly independent over $\K$ and therefore have dimension
$\varphi(q)/2$. In fact, one expects this to 
hold even over  $\overline{\Q}$. This is likely to be a 
transcendental issue.
\\
\item{}
One believes that
$$
V_{tr}(\K) \cap \K = 0.
$$ 
Again, this is likely to be a transcendental issue.
\\
\item{}
Finally, one expects that  the sum
$$
V_{tr}(\K) +  V_{ar}(\K) + \K
$$ 
is direct.
But this  supposedly involves the 
question of independence between families of 
different transcendental numbers and hence may have 
both transcendental as well as arithmetic input. 
\end{itemize}

\section{Space generated by normalised Hurwitz zeta values}

In this section, we define the following new class 
of $\Q$-linear spaces.

\smallskip
\begin{defn}
For integers $k > 1 , q >  2$, let $S_k(q)$ be 
the $\Q$-linear space defined by
\begin{equation*}
S_k(q) := \Q-{ \rm span ~ of~} 
\left\{\frac{\zeta(k,a/q)}{\pi^k}:~ 1\leq a <q, ~(a,q)=1\right\}
\end{equation*}
and $\widehat{S}_k(q)$ be the $\Q$-linear space defined by
\begin{equation*}
\widehat{S}_k(q) := \Q-{ \rm span ~ of~} 
\left\{1, ~\frac{\zeta(k,a/q)}{\pi^k}:~ 1\leq a <q, ~(a,q)=1\right\}.
\end{equation*}
\end{defn}
These spaces appear similar to the spaces related to Milnor
and extended Milnor conjecture respectively. But there
is an important distinction, namely the parity of $k$ enters the picture.
Recall, the conjectural dimension of the extended Milnor spaces is 
independent of parity of $k$. But this is no longer the case for these
new spaces.
 
However as before, in relation to these spaces also, we can deduce the 
following lower bound.

\begin{thm}\label{**}
Let $k>1$ and $q>2$ be two integers. Then 
\begin{equation*}
\dim_{\Q}S_k(q) \geq  \frac{\varphi(q)}{2}. 
\end{equation*}
\end{thm}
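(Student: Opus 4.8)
The plan is to exhibit $\varphi(q)/2$ explicit elements of $S_k(q)$ and to show that these alone are linearly independent over $\Q$. The natural candidates are the normalised arithmetic combinations
$$
\mu_a := \frac{\zeta(k,a/q) + (-1)^k \zeta(k,1-a/q)}{\pi^k},
\qquad 1 \le a < q/2, \ (a,q)=1,
$$
of which there are exactly $\varphi(q)/2$. Since $1-a/q = (q-a)/q$ with $(q-a,q)=1$ and $q/2 < q-a < q$ (note $a=q/2$ cannot occur as it would violate coprimality for $q>2$), each $\mu_a$ is a $\Q$-linear combination of the two generators $\zeta(k,a/q)/\pi^k$ and $\zeta(k,(q-a)/q)/\pi^k$ of $S_k(q)$. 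Hence every $\mu_a$ lies in $S_k(q)$.

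It then remains to prove that the $\mu_a$ are linearly independent over $\Q$. First I would invoke the identity \eqref{strong}, which writes each numerator $\zeta(k,a/q)+(-1)^k\zeta(k,1-a/q)$ as a fixed nonzero rational multiple of $\frac{d^{k-1}}{dz^{k-1}}(\pi\cot\pi z)|_{z=a/q}$. Applying \lemref{lem1} with $\K = \Q$ — which is legitimate since $\Q \cap \Q(\zeta_q) = \Q$ trivially — and taking for the set $\rm T$ the residues $\{a : 1 \le a < q/2,\ (a,q)=1\}$ (whose union with $-\rm T$ is a complete set of coprime residues mod $q$), the numbers $\frac{d^{k-1}}{dz^{k-1}}\cot(\pi z)|_{z=a/q}$ are linearly independent over $\Q$. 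Consequently the numerators of the $\mu_a$ are linearly independent over $\Q$, and dividing each by the single nonzero constant $\pi^k$ preserves $\Q$-linear independence. Thus the $\varphi(q)/2$ elements $\mu_a$ are independent, which yields $\dim_\Q S_k(q) \ge \varphi(q)/2$.

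There is no serious obstacle here: the argument is essentially a transcription of the lower bound proof for $\widehat V_k(q,\K)$ given in \thmref{3}, the only point worth isolating being that scaling by the fixed nonzero real $\pi^k$ leaves $\Q$-linear independence intact (so that, unlike in the statements asserting $V_{ar}(\K) \cap \overline{\Q} = \{0\}$, no transcendence input on $\pi$ is needed for this baseline estimate). I would also emphasise that this bound is insensitive to the parity of $k$: for even $k$ the combination $\mu_a$ is a normalised sum and for odd $k$ a normalised difference of Hurwitz values, but in both cases it is precisely the quantity controlled by \eqref{strong} and \lemref{lem1}. The parity of $k$ flagged in the text should therefore surface only in the finer question of the exact dimension and in matching upper bounds, not in this lower bound.
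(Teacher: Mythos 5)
Your proposal is correct and follows essentially the same route as the paper: both proofs reduce the lower bound to the $\varphi(q)/2$ normalised combinations $\bigl(\zeta(k,a/q)+(-1)^k\zeta(k,1-a/q)\bigr)/\pi^k$, identify them via \eqref{strong} with derivatives of the cotangent, and invoke Okada's result (\lemref{lem1} with $\K=\Q$) to get $\Q$-linear independence, noting that the common scaling by a fixed power of $\pi$ is harmless. Your explicit remarks that no transcendence of $\pi$ is needed and that the bound is parity-insensitive are accurate refinements of what the paper leaves implicit, but they do not change the argument.
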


\begin{proof} 
First note that the space $S_k(q)$ is also spanned by the following sets of real numbers:
\begin{equation*}
\left\{\frac{\zeta(k,a/q)+\zeta(k,1-a/q)}{\pi^k}|~(a,q)=1, ~ 1\leq a<q/2\right\},
\end{equation*}
\begin{equation*}
 \left\{\frac{\zeta(k,a/q)-\zeta(k,1-a/q)}{\pi^k}|~(a,q)=1, ~ 1\leq a<q/2\right\}. 
\end{equation*}
Then, again by  the following ubiquitous identity 
\begin{equation*}
\zeta(k,a/q)+(-1)^k \zeta(k,1-a/q)=\frac{(-1)^{k-1}}{(k-1)!}\frac{d^{k-1}}{dz^{k-1}} 
(\pi\cot\pi z)|_{z=a/q} 
\end{equation*}
and by the result of  Okada, the numbers on the right hand side for 
$1\leq a< q/2$ with $(a,q)=1$  are $\Q$-linearly independent.
 Hence the following numbers
$$
 \frac{\zeta(k,a/q)+(-1)^k \zeta(k,1-a/q)}{\pi^k}, 
\phantom{mm} 1\leq a< q/2,~~ (a,q)=1 $$  are linearly independent over $\Q$.
\end{proof}

Interestingly, the parity of $k$ enters the picture non-trivially as
seen by the following proposition.

\begin{thm}
 Let $k>1$ be an even integer and $q>2$ be any integer. 
Then $S_k(q)=\widehat{S}_k(q)$.
\end{thm}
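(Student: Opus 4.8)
The plan is to prove the nontrivial inclusion $\widehat{S}_k(q) \subseteq S_k(q)$, the reverse inclusion being immediate since the two spaces differ only by the extra generator $1$. Thus it suffices to exhibit a \emph{nonzero rational number} lying in $S_k(q)$: once we know some $c \in \Q^\times$ belongs to $S_k(q)$, then $1 = c^{-1}\cdot c \in S_k(q)$ because $S_k(q)$ is a $\Q$-vector space, and hence $\widehat{S}_k(q) = S_k(q) + \Q\cdot 1 = S_k(q)$.

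First I would form the element $\sum_{(a,q)=1,\, 1\le a<q}\zeta(k,a/q)/\pi^k$, which lies in $S_k(q)$ by definition, being the sum of the spanning set. To evaluate it I would invoke the distribution relation already used in the proof of \thmref{6}, namely
$$
\zeta(k)\prod_{\substack{p\mid q\\ p\text{ prime}}}(1-p^{-k}) \;=\; q^{-k}\sum_{\substack{a=1\\(a,q)=1}}^{q-1}\zeta(k,a/q),
$$
so that upon dividing by $\pi^k$,
$$
\sum_{\substack{a=1\\(a,q)=1}}^{q-1}\frac{\zeta(k,a/q)}{\pi^k} \;=\; q^{k}\Big(\prod_{p\mid q}(1-p^{-k})\Big)\frac{\zeta(k)}{\pi^k}.
$$

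The crux is the next step, and it is precisely here that the parity of $k$ enters. For even $k$, Euler's classical evaluation $\zeta(k)=\dfrac{(-1)^{k/2+1}B_k(2\pi)^{k}}{2\,k!}$ shows that $\zeta(k)/\pi^k$ is rational; moreover it is positive, since $\zeta(k)>0$ and $\pi^k>0$. As $q^{k}\prod_{p\mid q}(1-p^{-k})$ is also a positive rational, the right-hand side above is a nonzero rational number $c$. Hence $c\in S_k(q)$, and the reduction of the first paragraph gives $1\in S_k(q)$, completing the argument.

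I do not anticipate a genuine obstacle: the proof is a direct assembly of the distribution relation with Euler's formula, the only point needing care being the positivity that guarantees $c\neq 0$. What deserves emphasis is that the argument collapses entirely for odd $k$, where the rationality (indeed even the irrationality) of $\zeta(k)/\pi^k$ is unknown; this is exactly the mechanism by which parity distinguishes these spaces, as anticipated in the remarks following the definition of $S_k(q)$ and $\widehat{S}_k(q)$.
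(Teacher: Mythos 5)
Your proof is correct and takes essentially the same route as the paper: both sum the generators, apply the distribution relation $q^{-k}\sum_{(a,q)=1}\zeta(k,a/q)=\zeta(k)\prod_{p\mid q}(1-p^{-k})$, and invoke the rationality of $\zeta(k)/\pi^k$ for even $k$ to conclude $\Q\subset S_k(q)$. Your explicit verification (via positivity) that the resulting rational is nonzero is a detail the paper leaves implicit, but it is the same argument.
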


\begin{proof} 
 Note that for even $k$,
\begin{equation*}
\sum_{\substack{a=1\\ (a,q)=1}}^{q-1}\frac{\zeta(k,a/q)}{\pi^k}=q^k
\underset{p ~{\rm prime}, 
\atop  p|q}{\prod}(1-p^{-k})\frac{\zeta(k)}{\pi^k}\in \Q.
\end{equation*}
Hence for $k$ even, $$\Q\subset S_k(q)$$ and thus 
$S_k(q)=\widehat{S}_k(q)$.
\end{proof}

Thus for an even $k$, $\Q$  lies 
in the associated normalised arithmetic space.
However, when $k$ is an odd integer, we expect the picture 
to be different. For instance, unlike the earlier case, 
$\Q$ does not seem to belong to the  normalised arithmetic space,
at least when $4 \nmid q$. 

\smallskip

\smallskip
\begin{thm}
 Let $k>1$ be an odd integer and $4 \nmid q$. 
Then $\Q$ does not belong to the normalised arithmetic space,
that is, the $\Q$-vector space generated
by the numbers
 $$
 \frac{\zeta(k,a/q) - \zeta(k,1-a/q)}{\pi^k}, \phantom{m} 1\leq a< q/2,
~(a,q)=1. 
$$
intersects $\Q$ trivially.
\end{thm}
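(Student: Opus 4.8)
The plan is to reduce the statement to the elementary cyclotomic fact that $i \in \Q(\zeta_q)$ precisely when $4 \mid q$, using \lemref{lem2} to place the normalised differences, after division by a power of $i$, inside $\Q(\zeta_q)$.

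Since $k$ is odd, $(-1)^k = -1$, so the generators in question are
$$
L_a := \frac{\zeta(k,a/q) + (-1)^k\zeta(k,1-a/q)}{\pi^k}, \qquad 1 \le a < q/2,\ (a,q)=1.
$$
By \lemref{lem2}, each quotient $\dfrac{L_a}{i^k} = \dfrac{\zeta(k,a/q)+(-1)^k\zeta(k,1-a/q)}{(i\pi)^k}$ lies in $\Q(\zeta_q)$. As $k$ is odd we have $i^k \in \{i,-i\}$, so every $L_a$ equals $i^k$ times an element of $\Q(\zeta_q)$.

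Now I would suppose, for contradiction, that some element of the space lies in $\Q$; that is, $\sum_a c_a L_a = r$ with all $c_a \in \Q$ and $r \in \Q$ nonzero. Dividing this relation by $i^k$ gives
$$
\frac{r}{i^k} = \sum_a c_a\,\frac{L_a}{i^k} \in \Q(\zeta_q),
$$
since the right-hand side is a $\Q$-linear combination of elements of $\Q(\zeta_q)$. Because $i^k = \pm i$, this says $\pm i r \in \Q(\zeta_q)$, and dividing by the nonzero rational $\pm r$ yields $i \in \Q(\zeta_q)$.

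Finally I would invoke the description of quadratic subfields of cyclotomic fields: $\Q(i)=\Q(\zeta_4)\subseteq\Q(\zeta_q)$ if and only if $4 \mid q$, which follows from $\Q(\zeta_4)\cap\Q(\zeta_q)=\Q(\zeta_{\gcd(4,q)})$ together with $[\Q(\zeta_4):\Q]=2$. Under the hypothesis $4 \nmid q$ one has $\gcd(4,q)\in\{1,2\}$, so $\Q(\zeta_{\gcd(4,q)})=\Q$ and hence $i\notin\Q(\zeta_q)$, contradicting the previous step. Therefore $r=0$, and the normalised arithmetic space meets $\Q$ only in $0$. The single place where the hypothesis $4 \nmid q$ enters---and the only genuinely arithmetic ingredient---is this last cyclotomic computation; the rest is a formal manipulation of \lemref{lem2}, the one subtlety being to keep track of the factor $i^k$ for odd $k$. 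I therefore anticipate no serious obstacle beyond correctly handling this normalisation.
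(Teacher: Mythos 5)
Your proposal is correct and takes essentially the same route as the paper: both invoke \lemref{lem2} to conclude that, for odd $k$, each normalised generator becomes an element of $\Q(\zeta_q)$ after multiplication by a suitable power of $i$, so a nonzero rational value in the span would force $i \in \Q(\zeta_q)$, contradicting $4 \nmid q$. Your only addition is to spell out the cyclotomic fact via $\Q(\zeta_4)\cap\Q(\zeta_q)=\Q(\zeta_{\gcd(4,q)})$, which the paper simply asserts.
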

\begin{proof}
Suppose that $1$ belongs to the given space.
As noted before,   each of these numbers
 $$
 \frac{\zeta(k,a/q) - \zeta(k,1-a/q)}{\pi^k},
 \phantom{m}1\leq a< q/2, (a,q)=1 
 $$
when multiplied by $i$ lie inside the 
$q$-th cyclotomic field.  Therefore, if
$1$ is expressible as a rational linear combination
of these numbers, then $i$ necessarily lies in
the $q$th cyclotomic field. This not possible
as $4 \nmid q$. This completes the proof.
\end{proof}

Further, when $k$ is odd, we can also derive the following result
by employing the earlier techniques as in \propref{2}.

\smallskip
\begin{thm}
 Let $k>1$ be an odd integer. Then there exists a $q_0 > 2$ such that
\begin{equation*}
{\rm dim}_\Q S_k(q)\geq \frac{\varphi(q)}{2}+1
\end{equation*}
for any $q > 2$ co-prime to $q_0$.
\end{thm}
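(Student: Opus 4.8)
The plan is to sharpen the bound of \thmref{**} by exhibiting a single explicit element of $S_k(q)$ that escapes the span of the $\varphi(q)/2$ linearly independent ``arithmetic'' generators, for every $q$ avoiding one fixed bad modulus $q_0$. Write $x_a := (\zeta(k,a/q)-\zeta(k,1-a/q))/\pi^k$ for $1\le a<q/2$, $(a,q)=1$; since $k$ is odd these are exactly the numbers proved $\Q$-linearly independent in the proof of \thmref{**}, so the normalised arithmetic space $N_{ar}(q):=\sum_a \Q\,x_a$ has dimension precisely $\varphi(q)/2$, and $S_k(q)=N_{ar}(q)+N_{tr}(q)$, where $N_{tr}(q)$ is the span of the companion sums $(\zeta(k,a/q)+\zeta(k,1-a/q))/\pi^k$. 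It then suffices to produce an element of $S_k(q)\setminus N_{ar}(q)$.

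First I would record that $\zeta(k)/\pi^k\in S_k(q)$. Summing the Hurwitz values over reduced residues and unfolding gives
$$\sum_{(a,q)=1}\frac{\zeta(k,a/q)}{\pi^k}=q^k\prod_{p\mid q}(1-p^{-k})\,\frac{\zeta(k)}{\pi^k},$$
and since the left side is a $\Q$-combination of the generators of $S_k(q)$ while the right side has a nonzero rational prefactor, $\zeta(k)/\pi^k\in S_k(q)$; moreover, pairing $a$ with $q-a$ shows the left side equals a sum of the $N_{tr}(q)$ generators, so $\zeta(k)/\pi^k$ genuinely sits in the transcendental part.

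Next I would analyse when $\zeta(k)/\pi^k$ could nonetheless fall into $N_{ar}(q)$. By \lemref{lem2}, $x_a=i^k\lambda_a$ with $\lambda_a\in\Q(\zeta_q)$; as $k$ is odd, $i^{k+1}=\pm1$, so $i\,x_a=i^{k+1}\lambda_a\in\Q(\zeta_q)$, whence $i\,N_{ar}(q)\subseteq\Q(\zeta_q)$. Thus if $\zeta(k)/\pi^k\in N_{ar}(q)$ then $i\,\zeta(k)/\pi^k\in\Q(\zeta_q)$. Collecting the bad moduli $B_k:=\{\,q>2:\zeta(k)/\pi^k\in N_{ar}(q)\,\}$, I would observe that if $q,r\in B_k$ are coprime then $i\,\zeta(k)/\pi^k\in\Q(\zeta_q)\cap\Q(\zeta_r)=\Q$; but $\zeta(k)/\pi^k$ is a nonzero real, so $i\,\zeta(k)/\pi^k$ is nonzero and purely imaginary, hence not in $\Q$. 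Therefore no two coprime integers lie in $B_k$. Choosing $q_0$ to be any element of $B_k$ (and $q_0=3$ when $B_k=\emptyset$), every $q>2$ with $(q,q_0)=1$ satisfies $q\notin B_k$, i.e. $\zeta(k)/\pi^k\notin N_{ar}(q)$; for such $q$ the space $N_{ar}(q)+\Q\,\zeta(k)/\pi^k\subseteq S_k(q)$ has dimension $\varphi(q)/2+1$, which is the claim.

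The conceptual heart, and the step I expect to be the main obstacle, is extracting from ``$\zeta(k)/\pi^k\in N_{ar}(q)$'' a cyclotomic membership clean enough to be intersected away. The decisive simplification over \propref{2} is that $i^{k+1}=\pm1$ for odd $k$, so multiplying the arithmetic generators by $i$ lands them squarely inside $\Q(\zeta_q)$ with no residual factor of $i$ to track, letting the disjointness $\Q(\zeta_q)\cap\Q(\zeta_r)=\Q$ apply directly. A pleasant feature is that the final contradiction needs only $\zeta(k)\neq0$, so — unlike \propref{2} — no appeal to Rivoal's irrationality results is required. The one point demanding care is verifying that the averaging identity places $\zeta(k)/\pi^k$ in $N_{tr}(q)$ rather than realising it as one of the arithmetic generators, so that it is a bona fide candidate for lying outside $N_{ar}(q)$.
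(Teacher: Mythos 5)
Your proposal is correct and follows essentially the approach the paper intends: the paper gives no separate proof, saying only that the result follows ``by employing the earlier techniques as in Proposition~\ref{2}'', i.e.\ comparing two coprime moduli $q,r$, using that for odd $k$ the normalised arithmetic generators lie in $i\,\Q(\zeta_q)$ and that $\Q(\zeta_q)\cap\Q(\zeta_r)=\Q$, exactly as you do. Your remark that the $\pi^k$-normalisation reduces the final contradiction to the nonvanishing and reality of $\zeta(k)/\pi^k$ (so no appeal to Rivoal is needed) correctly accounts for the theorem holding for every odd $k>1$, whereas Proposition~\ref{2} is only for infinitely many odd $k$.
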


To conclude, while 
$$
S_k(q)=\widehat{S}_k(q)
$$ 
when $k$ is even, there is reason to believe that
$$
S_k(q) \subsetneq \widehat{S}_k(q)
$$
when $k$ is odd, at least when $4 \nmid q$.

\bigskip
\noindent
{\bf Acknowledgments.}  It is our pleasure to thank Ram Murty for
several suggestions in relation to an earlier version of the paper.
The last two authors would like to thank ICTP, Trieste for the hospitality 
extended to them during their visit as associates  where this work was initiated.

\end{document}